\newtheorem{theorem}{Theorem}[section]
\newtheorem{lemma}[theorem]{Lemma}
\theoremstyle{definition}
\newtheorem{example}[theorem]{Example}
\newtheorem{Prop}[theorem]{Proposition}
\newtheorem{Cor}[theorem]{Corollary}
\numberwithin{equation}{section}
\newcommand \bd{{\bf d}}
\newcommand\bbn{{\mathbb N}}
\newcommand\bbz{{\mathbb Z}}
\newcommand\bbq{{\mathbb Q}}
\newcommand\bbc{{\mathbb C}}
\newcommand\ra{\rightarrow}
\newcommand\aut{\mbox{Aut}}
\newcommand\ext{\mbox{Ext}\,}
\newcommand\Hom{\mathrm{Hom}}
\newcommand\udim{\mbox{\underline {dim}}}
\newcommand\ed{\mbox{End}}
\newcommand\mc{{\mathcal{C}}}
\newcommand\md{{\mathcal{D}}}
\newcommand\ue{{\underline{e}}}
\newcommand\uee{{\underline{\bf e}}}
\begin{document}

\title{Remarks on Hall algebras of triangulated categories}

\author{Jie Xiao}
\author{Fan Xu}

\address{Department of Mathematics\\ Tsinghua University,
Beijing{\rm 100084},P.R.China} \email{jxiao@math.tsinghua.edu.cn}

\address{Department of Mathematics\\ Tsinghua University,
Beijing{\rm 100084},P.R.China} \email{fanxu@mail.tsinghua.edu.cn}

\thanks{Jie Xiao was supported by NSF of China (No. 11131001) and Fan Xu was supported by NSF of China (No. 11071133).}

\subjclass[2000]{Primary  16G20, 17B67; Secondary  17B35, 18E30}

\date{\today}

\keywords{derived Hall algebra, motivic Hall algebra.}

\begin{abstract}
We introduce the notion of the Drinfeld dual of an algebra and show that Hall algebras defined by Kontsevich-Soibelman in \cite{KS} are the Drinfeld duals of derived Hall algebras defined in \cite{Toen2005} and \cite{XX2006}. Moreover, we construct the motivic analogue of a derived Hall algebra and prove
that it is isomorphic to the motivic Hall algebra constructed in \cite{KS}.
\end{abstract}

\maketitle

\section{Introduction}
Let $k$ be a finite field with $q$ elements and {$\mathcal{A}$} be a
finitary $k$-category, i.e., a (small) $k$-linear abelian category satisfying:
$(1)$ $\mathrm{dim}_k\Hom_{\mathcal{A}} (M,N) <\infty $; $(2)$
$\mathrm{dim}_k\ext^1_{\mathcal{A}} (M,N) <\infty$ for any $M,N \in
\mathcal{A}$. The Hall algebra  $\mathcal{H}(\mathcal{A})$ associated
to a finitary category $\mathcal{A}$ is originally defined by Ringel \cite{Ringel1990} in order to realize a quantum group. In the
simplest version, it is an associative algebra , which, as a
$\bbq$-vector space, has a basis consisting of the isomorphism
classes $[X]$ for $X\in \mathcal{A}$ and has the multiplication
$[X]*[Y]=\sum_{[L]}\mathrm{g}_{XY}^L[L]$,
 where $X,Y,L \in
\mathcal{A}$ and $\mathrm{g}_{XY}^L=|\{ M \subset L| M \simeq X
\text{ and } L/M \simeq Y\}|.$ The structure constant $\mathrm{g}_{XY}^L$ is called  \emph{Hall number} and the algebra  $\mathcal{H}(\mathcal{A})$ now is called Ringel-Hall algebra. The Ringel-Hall algebras have been developed
many variants (see \cite{Joyce2}) as a framework involving the
categorification and the geometrization of Lie algebras and quantum
groups in the past two decades (for example, see \cite{Ringel1990,
Lusztig, Rie1994, Nakajima1998, PX1997, PX2000}). If $\mathcal{A}=\mathrm{mod}\Lambda$ for a hereditary finitary $k$-algebra $\Lambda$, then there exists a comultiplication $\delta: \mathcal{H}(\mathcal{A})\rightarrow \mathcal{H}(\mathcal{A})\otimes\mathcal{H}(\mathcal{A})$ constructed by Green (\cite{Green}). Burban and Schiffmann
also studied the (topological) comultiplication of $\mathcal{H}(Coh\mathbb{X})$ for some curves $\mathbb{X}$ (\cite{BS1}, \cite{BS2}). The comultiplication by Green
naturally induces a new multiplication on $\mathcal{H}(\mathcal{A})$. We call this new algebra structure the Drinfeld dual of $\mathcal{H}(\mathcal{A})$ (Section 2).

In \cite{Toen2005}, To\"en introduced derived Hall algebras associated to derived categories. In \cite{XX2006}, the notion was extended for triangulated categories with some homological finiteness conditions and a new proof for the associativity of derived Hall algebra was given. From the viewpoint of associativity, derived Hall algebras generalize Ringel-Hall algebras. The study of the theory of derived Hall algebra is meaningful.  It is applied to characterize refined Donaldson-Thomas invariants via constructing an integration map from the derived Hall algebra of a 3-Calabi-Yau category to a quantum torus~\cite{KS}.  Recently, Hernandez and Leclerc~\cite{HL} defined the monoidal categorification of a derived Hall algebra.  One may hope to construct an analogue of the comultiplication as the case of Ringel-Hall algebras.  In Section 2 and 3 of this note, we define a map over derived Hall algebras analogous to comultiplications of Ringel-Hall algebras. In general, the map does not provide an algebra homomorphism even for the derived category of a hereditary algebra. However, it induces a new multiplication structure on a derived Hall algebra. Then we can write down the Drinfeld dual of a derived Hall algebra. The Drinfeld dual coincides with the finite field version of the motivic Hall algebras introduced by Kontsevich-Soibelman~\cite{KS}. In Section 4, we point out that the method of the paper ~\cite{XX2006} provides two symmetries associated with the octahedral axiom and they are equivalent. the first symmetry implies the associativity of the derived Hall algebra in the sense of To\"{e}n, the second symmetry implies the associativity of its Drinfeld dual.  Section 5 is contributed to construct the motivic version of a derived Hall algebra and show it is associative and isomorphic to the  Kontsevich-Soibelman's motivic Hall algebra.

\section{The Drinfeld dual of an algebra}
Let $A$ be an associative algebra over $\mathbb{Q}$ such that as a space, it has a basis $\{u_{\alpha}\}_{\alpha\in \mathcal{P}}$ and the multiplication is given by
$$
u_{\alpha}\cdot u_{\beta}=\sum_{\lambda\in \mathcal{P}}g_{\alpha\beta}^{\lambda}u_{\lambda},
$$
where $g_{\alpha\beta}^{\lambda}\in \mathbb{Q}$ are the structure constants. We denote by $A\widehat{\otimes} A$ the $\bbq$-space of all formal (possibly infinite) linear combinations $\sum_{\alpha,\beta\in \mathcal{P}}c_{\alpha, \beta}u_{\alpha}\otimes u_{\beta}$ with $c_{\alpha, \beta}\in \bbq$,  which can be viewed as the completion of $A\otimes A.$ Assume that there exists a linear map $\delta:A\rightarrow A\widehat{\otimes} A$ defined by
$$
\delta(u_{\lambda})=\sum_{\alpha,\beta}h_{\lambda}^{\alpha\beta}u_{\alpha}\otimes u_{\beta}.
$$
satisfying that for fixed $\alpha, \beta\in \mathcal{P}$, there are only finitely many $\lambda$ such that $h_{\lambda}^{\alpha\beta}\neq 0.$

Given a non-degenerate symmetric bilinear form $(-, -): A\times A\rightarrow \mathbb{Q}$ such that
$$
(u_{\alpha}, u_{\beta})=\delta_{\alpha, \beta}t_{\alpha}.
$$
for some nonzero $t_{\alpha}\in \bbq.$ Then the bilinear form naturally induces the following two bilinear forms
$$
f_1: (A\otimes A)\times(A\widehat{\otimes} A)\rightarrow \mathbb{Q}
\quad\mbox{ and }\quad
f_2: (A\widehat{\otimes} A)\times(A\otimes A)\rightarrow \mathbb{Q}.
$$
The first bilinear form is defined by $$f_1(\sum_{\alpha, \beta\in \mathcal{P}}c_{\alpha,\beta}u_{\alpha}\otimes u_{\beta}, \sum_{\alpha',\beta'\in \mathcal{P}}d_{\alpha', \beta'}u_{\alpha'}\otimes u_{\beta'})=\sum_{\alpha, \beta\in \mathcal{P}}c_{\alpha,\beta}d_{\alpha,\beta}t_{\alpha}t_{\beta}.$$
Note the sum of the right hand is induced by the first sum of the left hand which is a finite sum. The map $f_2$ is defined similarly.
\begin{Prop}\label{coproduct-Lemma}
For any $a, b, c\in A$, the equality $(a, bc)=(\delta(a), b\otimes c)$ holds if and only if for any $u_{\beta}, u_{\gamma}$ and $u_{\alpha}$, we have $h_{\alpha}^{\beta\gamma}t_{\beta}t_{\gamma}=g_{\beta\gamma}^{\alpha}t_{\alpha}.$
\end{Prop}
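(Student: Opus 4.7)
The plan is to reduce the statement to an identity on basis vectors and then read off both directions by linearity. Both sides of $(a, bc) = (\delta(a), b \otimes c)$ are linear in each of $a, b, c$ separately (noting that $\delta$ is linear and the finiteness assumption on $\delta$ guarantees that $f_2(\delta(a), b \otimes c)$ is well-defined for arbitrary $a$ and for $b, c$ in the basis). Hence the identity holds for all $a, b, c \in A$ if and only if it holds for all triples of basis elements $a = u_\alpha$, $b = u_\beta$, $c = u_\gamma$.

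First I would compute the left-hand side on basis elements. Using $u_\beta \cdot u_\gamma = \sum_\lambda g_{\beta\gamma}^\lambda u_\lambda$ together with the orthogonality relation $(u_\alpha, u_\lambda) = \delta_{\alpha,\lambda} t_\alpha$, one obtains
\[
(u_\alpha, u_\beta u_\gamma) = \sum_\lambda g_{\beta\gamma}^\lambda (u_\alpha, u_\lambda) = g_{\beta\gamma}^\alpha t_\alpha.
\]
Next I would compute the right-hand side. From $\delta(u_\alpha) = \sum_{\beta',\gamma'} h_\alpha^{\beta'\gamma'} u_{\beta'} \otimes u_{\gamma'}$ and the definition of $f_2$, only the term with $\beta' = \beta$, $\gamma' = \gamma$ contributes, giving
\[
(\delta(u_\alpha), u_\beta \otimes u_\gamma) = h_\alpha^{\beta\gamma}\, t_\beta t_\gamma.
\]

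At this point both directions are immediate. If the equality holds for all $a, b, c$, specializing to basis elements yields $g_{\beta\gamma}^\alpha t_\alpha = h_\alpha^{\beta\gamma} t_\beta t_\gamma$ for every triple $(\alpha, \beta, \gamma)$. Conversely, if this numerical identity holds for all triples, then the two computations above agree on all triples of basis elements, and extending by linearity (which is legitimate by the first paragraph) gives the identity for all $a, b, c \in A$.

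There is no substantive obstacle; the only point that deserves care is ensuring that $f_2(\delta(a), b \otimes c)$ makes sense and is bilinear in $a \otimes (b \otimes c)$, which is precisely guaranteed by the standing hypothesis that for fixed $\alpha, \beta$ only finitely many $\lambda$ satisfy $h_\lambda^{\alpha\beta} \neq 0$, together with the definition of $f_2$ as a finite sum dictated by the $A \otimes A$ argument.
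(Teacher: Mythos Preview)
Your proof is correct and follows essentially the same approach as the paper: both reduce to basis elements by linearity, compute $(u_\alpha, u_\beta u_\gamma) = g_{\beta\gamma}^\alpha t_\alpha$ and $(\delta(u_\alpha), u_\beta \otimes u_\gamma) = h_\alpha^{\beta\gamma} t_\beta t_\gamma$, and conclude. Your version is slightly more explicit about the role of the finiteness hypothesis in making $f_2$ well-defined, which the paper leaves implicit.
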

\begin{proof}
The proof is the same as \cite[Proposition 7.1]{Ringel1995}. It is enough to consider the case that $a, b$ and $c$ are three basis elements denoted by $u_{\alpha}, u_{\beta}$ and $u_{\gamma}$, respectively. By definition,
$$
(a, bc)=(u_{\alpha}, u_{\beta}u_{\gamma})=(u_{\alpha}, \sum_{\lambda\in \mathcal{P}}g_{\beta\gamma}^{\lambda}u_{\lambda})=g_{\beta\gamma}^{\alpha}t_{\alpha}
$$
and
$$
(\delta(a), b\otimes c)=(\sum_{\xi,\tau\in \mathcal{P}}h_{\alpha}^{\xi\tau}u_{\xi}\otimes u_{\tau}, u_{\beta}\otimes u_{\gamma})=h_{\alpha}^{\beta\gamma}t_{\beta}t_{\gamma}.
$$
The proposition follows.
\end{proof}
Let $A^{Dr}$ be a space over $\bbq$ with the basis $\{v_{\alpha}\}_{\alpha\in \mathcal{P}}$. We define the multiplication by setting
$$
v_{\alpha}*v_{\beta}=\sum_{\lambda\in \mathcal{P}}h_{\lambda}^{\alpha\beta}v_{\lambda}.
$$
\begin{theorem}\label{dual}
Assume $(a, bc)=(\delta(a), b\otimes c)$ for any $a, b, c\in A$. Then there exists an isomorphism
$$
\Phi: A^{Dr}\rightarrow A
$$
by sending $v_{\alpha}$ to $t_{\alpha}^{-1}u_{\alpha}.$
\end{theorem}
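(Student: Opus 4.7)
The plan is to verify the two standard requirements for $\Phi$ to be an algebra isomorphism: that it is a bijection of vector spaces, and that it is multiplicative. Both reduce to bookkeeping once the identity from Proposition~\ref{coproduct-Lemma} is invoked.

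First, I would observe that $\Phi$ is a $\bbq$-linear bijection: since each $t_\alpha$ is nonzero, the family $\{t_\alpha^{-1}u_\alpha\}_{\alpha\in\mathcal{P}}$ is a rescaling of the basis $\{u_\alpha\}_{\alpha\in\mathcal{P}}$, hence itself a basis of $A$. So $\Phi$ is simply the unique $\bbq$-linear map carrying one basis to another, which is automatically invertible.

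Next, I would check multiplicativity on basis elements. On one hand,
$$
\Phi(v_\alpha * v_\beta)=\Phi\Bigl(\sum_{\lambda\in\mathcal{P}}h_\lambda^{\alpha\beta}v_\lambda\Bigr)=\sum_{\lambda\in\mathcal{P}}h_\lambda^{\alpha\beta}t_\lambda^{-1}u_\lambda,
$$
where the sum is finite because of the hypothesis on $\delta$ (for fixed $\alpha,\beta$ only finitely many $h_\lambda^{\alpha\beta}$ are nonzero), so the expression really does lie in $A$. On the other hand,
$$
\Phi(v_\alpha)\cdot\Phi(v_\beta)=t_\alpha^{-1}t_\beta^{-1}\,u_\alpha\cdot u_\beta=t_\alpha^{-1}t_\beta^{-1}\sum_{\lambda\in\mathcal{P}}g_{\alpha\beta}^\lambda u_\lambda.
$$
Comparing coefficients of $u_\lambda$, multiplicativity is equivalent to the scalar identity
$$
h_\lambda^{\alpha\beta}\,t_\lambda^{-1}=t_\alpha^{-1}t_\beta^{-1}\,g_{\alpha\beta}^\lambda
\quad\text{for all }\alpha,\beta,\lambda\in\mathcal{P},
$$
which, after clearing denominators, is exactly $h_\lambda^{\alpha\beta}t_\alpha t_\beta = g_{\alpha\beta}^\lambda t_\lambda$ — the content of Proposition~\ref{coproduct-Lemma} applied under the standing assumption $(a,bc)=(\delta(a),b\otimes c)$.

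There is essentially no obstacle here; the only subtlety worth flagging is the finite-support hypothesis on $\delta$, which ensures that $v_\alpha * v_\beta$ is well-defined as an element of $A^{Dr}$ (rather than of some completion) so that both sides of the multiplicativity equation live in $A$ and coefficient comparison makes sense.
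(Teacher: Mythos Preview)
Your proof is correct and follows essentially the same route as the paper's: both verify that $\Phi$ is a linear bijection and then reduce multiplicativity on basis elements to the identity $h_\lambda^{\alpha\beta}t_\alpha t_\beta = g_{\alpha\beta}^\lambda t_\lambda$ supplied by Proposition~\ref{coproduct-Lemma}. Your version is slightly more explicit about the finite-support hypothesis, but the argument is the same.
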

\begin{proof}
It is clear that the map $\Phi$ is an isomorphism of vector spaces. For any $\alpha, \beta\in \mathcal{P}$, $\Phi(v_{\alpha}*v_{\beta})=\sum_{\lambda\in \mathcal{P}}h_{\lambda}^{\alpha\beta}t^{-1}_{\lambda}u_{\lambda}.$
Also, we have $\Phi(v_{\alpha})\cdot \Phi(v_{\beta})=t^{-1}_{\alpha}t^{-1}_{\beta}u_{\alpha}\cdot u_{\beta}=\sum_{\alpha, \beta\in \mathcal{P}}t^{-1}_{\alpha}t^{-1}_{\beta}g_{\alpha\beta}^{\lambda}u_{\lambda}.$ Proposition \ref{coproduct-Lemma} concludes that $\Phi$ is an algebra homomorphism, i.e.,
$$
\Phi(v_{\alpha}*v_{\beta})=\Phi(v_{\alpha})\cdot \Phi(v_{\beta}).
$$
\end{proof}
As a corollary, the algebra $A^{Dr}$ is also an associative algebra and called the Drinfeld dual of $A.$ The first canonical example comes from Ringel-Hall algebra.
We recall its definition \cite{Ringel1990},\cite{Schiffmann1}.
\begin{example}
Let $\mathcal{A}$ be a small finitary abelian category and linear over some finite field $k$ with $q$ elements and $\mathcal{P}$ be the set of isomorphism classes of objects in $\mathcal{A}$. For $\alpha\in \mathcal{P}$, we take a representative $V_{\alpha}\in \mathcal{A}$. The Ringel-Hall algebra of $\mathcal{A}$ is a vector space $\mathcal{H}(\mathcal{A})=\mathbb{Q}\mathcal{P}=\oplus_{\alpha\in \mathcal{P}}\mathbb{Q}u_{\alpha}$ with the multiplication
$$
u_{\alpha} \cdot u_{\beta}=\sum_{\lambda\in \mathcal{P}}g_{\alpha\beta}^{\lambda}u_{\lambda}
$$
where $g_{\alpha\beta}^{\lambda}=|\{V\subseteq V_{\lambda}\mid V\cong V_{\alpha}, V_{\lambda}/V\cong V_{\beta}\}|.$
Define the map \cite[Section 1.4]{Schiffmann1}:
$$
\delta: \mathcal{H}(\mathcal{A})\rightarrow \mathcal{H}(\mathcal{A})\widehat{\otimes} \mathcal{H}(\mathcal{A})
$$
satisfying $\delta(u_{\lambda})=\sum_{\alpha, \beta}h_{\lambda}^{\alpha\beta}u_{\alpha}\otimes u_{\beta}$
where $h_{\lambda}^{\alpha\beta}=\frac{|\mathrm{Ext}^1_{\Lambda}(V_{\alpha}, V_{\beta})_{V_{\lambda}}|}{|\mathrm{Hom}_{\Lambda}(V_{\alpha}, V_{\beta})|}$ (see \cite{Schiffmann1}).
For fixed $\alpha, \beta\in \mathcal{P}$,  $\mathrm{dim}_k\mathrm{Ext}^1(V_{\alpha}, V_{\beta})<\infty$. Then there are finitely many $\lambda$ such that $h_{\lambda}^{\alpha\beta}\neq 0.$
The relation between $h_{\lambda}^{\alpha\beta}$ and $g_{\alpha\beta}^{\lambda}$ is given by the Riedtmann-Peng formula
$$
h_{\lambda}^{\alpha\beta}=\frac{|\mathrm{Ext}^1_{\Lambda}(V_{\alpha}, V_{\beta})_{V_{\lambda}}|}{|\mathrm{Hom}_{\Lambda}(V_{\alpha}, V_{\beta})|}=g_{\alpha\beta}^{\lambda}a_{\alpha}a_{\beta}a_{\lambda}^{-1}
$$
where $a_{\alpha}=|\mathrm{Aut}(V_{\alpha})|.$
\newline Define a symmetric bilinear form on $\mathcal{H}(\mathcal{A})$:
$$
(u_{\alpha}, u_{\beta})=\delta_{\alpha\beta}\frac{1}{|\mathrm{Aut}(V_{\alpha})|}=\delta_{\alpha\beta}\frac{1}{a_{\alpha}}
$$
It induces bilinear forms $(\mathcal{H}(\mathcal{A})\otimes \mathcal{H}(\mathcal{A}))\times (\mathcal{H}(\mathcal{A})\widehat{\otimes} \mathcal{H}(\mathcal{A}))\rightarrow \bbq$ and $(\mathcal{H}(\mathcal{A})\widehat{\otimes} \mathcal{H}(\mathcal{A}))\times (\mathcal{H}(\mathcal{A})\otimes \mathcal{H}(\mathcal{A}))\rightarrow \bbq$ by setting
$$
(a_1\otimes a_2, b_1\otimes b_2)=(a_1, b_1)(a_2, b_2).
$$
Using Proposition \ref{coproduct-Lemma} and the Riedtmann-Peng formula, we obtain
$$
 (a, bc)=(\delta(a), b\otimes c).
$$
for any $a, b$ and $c$ in $\mathcal{H}(\mathcal{A}).$
The Drinfeld dual algebra of  $\mathcal{H}(\mathcal{A})$ is a vector space $\mathcal{H}^{Dr}(\mathcal{A})=\oplus_{\alpha\in \mathcal{P}}\mathbb{Q}v_{\alpha}$ with the multiplication
$$
v_{\alpha}* v_{\beta}=\sum_{\lambda}h_{\lambda}^{\alpha\beta}v_{\lambda}.
$$
Theorem \ref{dual} concludes  an isomorphism $\Phi: \mathcal{H}^{Dr}(\mathcal{A})\rightarrow \mathcal{H}(\mathcal{A})
$
by setting $\Phi(v_{\alpha})=a_{\alpha}u_{\alpha}.$
\end{example}
\section{The derived Riedtmann-Peng formula}
We recall some notations and results in \cite{XX2006}. Let $k$ be a
finite field with $q$ elements and $\mc$ a (left) homologically
finite $k$-additive triangulated category with the translation (or
shift) functor $T=[1]$, i.e., a finite $k$-additive triangulated category satisfying the following conditions (see
\cite{XX2006})
\begin{enumerate}
\item[(1)] the homomorphism space $\Hom(X,Y)$ for any two
objects $X$ and $Y$ in $\mc$ is a finite dimensional $k$-space;
\item[(2)] the endomorphism ring $\ed X$ for any indecomposable
object $X$ in $\mc$ is a finite dimensional local $k$-algebra;
\item[(3)] $\mc$ is (left) locally
homological finite, i.e., $\sum_{i\geq 0}\dim_k\Hom(X[i],Y)<\infty$
for any $X$ and $Y$ in $\mc.$
\end{enumerate}
Note that the first two conditions imply the validity of the
Krull--Schmidt theorem in $\mc$, which means that any object in
$\mc$ can be uniquely decomposed into the direct sum of finitely
many indecomposable objects up to isomorphism. For $X\in \mc$, we denote by $[X]$ the isomorphism class of $X.$

For any $X, Y$ and $Z$ in $\mc$, we will use $fg$ to denote the
composition of morphisms $f:X\rightarrow Y$ and $g:Y\rightarrow Z,$
and $|S|$ to denote the cardinality of a finite set $S.$

Given $X,Y;L\in \mc,$ put
$$ W(X,Y;L)=\{(f,g,h)\in \Hom(X,L)\times
\Hom(L,Y)\times \Hom(Y,X[1])\mid$$$$
X\xrightarrow{f}L\xrightarrow{g}Y\xrightarrow{h}X[1] \mbox{ is a
triangle}\}.$$ \\
There is a natural action of $\aut X\times \aut Y$ on $W(X,Y;L).$
The orbit of $(f,g,h)\in W(X,Y;L)$ is denoted by
$$(f,g,h)^{\wedge}:=\{(af,gc^{-1},ch(a[1])^{-1})\mid (a,c)\in \aut X\times \aut Y\}.$$
The orbit space is denoted by $V(X,Y;L)=\{ (f,g,h)^{\wedge}|(f,g,h)
\in W(X,Y;L)\} .$ The radical of $\Hom(X, Y)$ is denoted by
$\mathrm{radHom}(X, Y)$ which is the set $$\{f\in \Hom(X, Y)\mid gfh
\mbox{ is not an isomorphism }\mbox{for any }g: A\rightarrow X\mbox{
and }
$$
$$h:Y\rightarrow A\mbox{ with } A\in \mc\mbox{ indecomposable }\}. $$
For any $L\xrightarrow{n}Z[1],$  there exist the decompositions
$L=L_{1}(n)\oplus L_{2}(n)$, $Z[1]=Z_{1}[1](n)\oplus Z_2[1](n)$ and
$b\in \aut L,$ $d\in \aut Z$ such that $bn(d[1])^{-1}=\left(
         \begin{array}{cc}
           n_{11} & 0 \\
           0 & n_{22} \\
         \end{array}
       \right)$ and the induced maps
$n_{11}:L_{1}(n)\rightarrow Z_{1}[1](n)$ is an isomorphism and
$n_{22}:L_{2}(n)\rightarrow Z_2[1](n)$ belongs to
$\mathrm{radHom}(L_{2}(n), Z_2[1](n))$. The above decomposition only
depends on the equivalence class  of $n$ up to an isomorphism. Let
$\alpha=(l,m,n)^{\wedge}\in V(Z,L;M),$ the classes of $\alpha$ and
$n$ are determined to each other in $V(Z,L;M).$ We may  denote $n$
by $n(\alpha)$ and $L_1(n)$ by $L_1(\alpha)$ respectively.

Denote by $\Hom(X,Y)_Z$ the subset of $\Hom(X,Y)$ consisting of the
morphisms whose mapping cones are isomorphic to $Z.$ For
$X,Y\in\mc,$ we set
$$\{X,Y\}:=\prod_{i>0}|\mathrm{Hom}(X[i],Y)|^{(-1)^{i}}.$$ By
checking the stable subgroups of automorphism groups, we have the
following proposition.

\begin{Prop}\cite[Proposition
2.5]{XX2006}\label{mainproposition} For any $M, L$ and $Z$ in $\mc$,
we have the following equalities
$$
\frac{|\mathrm{Hom}(M,L)_{Z[1]}|}{|\mathrm{Aut}L|}\cdot
\frac{\{M,L\}}{\{Z,L\}\cdot\{L,L\}}=\sum_{\alpha\in
V(Z,L;M)}\frac{|\mathrm{End}L_1(\alpha)|}{|\mathrm{Aut}L_1(\alpha)|},
$$
$$
\frac{|\mathrm{Hom}(Z,M)_{L}|}{|\mathrm{Aut}Z|}\cdot
\frac{\{Z,M\}}{\{Z,L\}\cdot\{Z,Z\}}=\sum_{\alpha\in
V(Z,L;M)}\frac{|\mathrm{End}L_1(\alpha)|}{|\mathrm{Aut}L_1(\alpha)|}
$$
\end{Prop}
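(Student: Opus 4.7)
The plan is to evaluate $|W(Z,L;M)|$ in two different ways and compare.  On the one hand, the action of $\aut Z\times \aut L$ on $W(Z,L;M)$ together with the orbit--stabilizer theorem gives
\[
|W(Z,L;M)|=|\aut Z|\cdot|\aut L|\sum_{\alpha\in V(Z,L;M)}\frac{1}{|\mathrm{Stab}(\alpha)|}.
\]
On the other hand, the projection $(l,m,n)\mapsto m$ maps $W(Z,L;M)$ surjectively onto $\Hom(M,L)_{Z[1]}$, and the fiber size can be computed from the homological data of the triangle.  The first identity will come from matching these two counts, and the second from the dual projection $(l,m,n)\mapsto l$.

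For the stabilizer computation, I would fix a representative $(l,m,n)\in\alpha$ with $n$ block--diagonal against the canonical decompositions $L=L_1(\alpha)\oplus L_2(\alpha)$ and $Z[1]=Z_1[1](\alpha)\oplus Z_2[1](\alpha)$ described in the paragraph before the proposition: $n_{11}$ is an isomorphism and $n_{22}$ is radical.  A pair $(a,c)\in\aut Z\times\aut L$ stabilizes $(l,m,n)$ exactly when $al=l$, $mc^{-1}=m$, and $cn(a[1])^{-1}=n$.  Writing $a$ and $c$ in block form relative to these decompositions, the equation for $n$ forces $(a,c)$ to act trivially on the $L_2$/$Z_2$ part (by the radicality of $n_{22}$) and to couple $a|_{Z_1}$ and $c|_{L_1}$ through the isomorphism $n_{11}$; the constraints $al=l$ and $mc^{-1}=m$ eliminate the remaining freedom in a way that leaves the contribution $|\mathrm{End}L_1(\alpha)|/|\mathrm{Aut}L_1(\alpha)|$ after the leading factor $|\aut Z|\cdot|\aut L|$ is absorbed.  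This careful bookkeeping is the technical heart of the proof and the main obstacle.

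For the fiber of $(l,m,n)\mapsto m$, fix $m\in \Hom(M,L)_{Z[1]}$ and parametrize the completions $(l,n)$ that turn $m$ into a distinguished triangle.  Applying $\Hom(-,L)$ and $\Hom(M,-)$ to the shifts of the triangle $Z\to M\to L\to Z[1]$ produces two families of long exact sequences; taking alternating products of the cardinalities of their terms and using the local homological finiteness condition (3) yields a telescoping identity among the numbers $|\Hom(X[i],Y)|$ for $X,Y\in\{Z,M,L\}$.  The resulting fiber size, combined with the $|\aut Z|$--ambiguity in the choice of cone representative, is exactly the factor $\{M,L\}/(\{Z,L\}\cdot\{L,L\})$ appearing in the statement, divided by $|\aut L|$.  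Equating this with the orbit--sum expression for $|W(Z,L;M)|$ and cancelling $|\aut Z|$ gives the first identity.

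The second identity is obtained by the symmetric argument: project instead via $(l,m,n)\mapsto l$ onto $\Hom(Z,M)_L$, and apply $\Hom(Z,-)$ and $\Hom(-,M)$ to the same triangle to obtain the Euler--characteristic factor $\{Z,M\}/(\{Z,L\}\cdot\{Z,Z\})$.  The orbit/stabilizer side of the computation is identical, since $V(Z,L;M)$, the decomposition $L=L_1(\alpha)\oplus L_2(\alpha)$, and $\mathrm{End}L_1(\alpha)$ are intrinsic to $\alpha$ and do not depend on which coordinate is used for the projection.
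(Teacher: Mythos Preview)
Your proposal contains the right ingredients --- orbit--stabilizer, the block decomposition of $n$, and long exact sequences producing the factors $\{-,-\}$ --- and this matches the paper's one--line hint (``by checking the stable subgroups of automorphism groups''). However, the way you organize these ingredients into a double count of $|W(Z,L;M)|$ does not work as stated.

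The problem is the fiber side. The fiber of the projection $(l,m,n)\mapsto m$ over a fixed $m$ is a single $\aut Z$--orbit (by TR3 and the five lemma), so its size is $|\aut Z|/|\mathrm{Stab}_{\aut Z}(l,n)|$, and this stabilizer genuinely depends on the class $\alpha$, not just on the Euler--type data $\{M,L\},\{Z,L\},\{L,L\}$. Your long exact sequence telescoping computes $|n\Hom(Z[1],L)|$ (this is Lemma~2.4 of \cite{XX2006}, and appears in the present paper as Lemma~\ref{space}), but that quantity is \emph{not} the fiber cardinality. Worse, once you notice that $\mathrm{Stab}_{\aut Z\times\aut L}(\alpha)$ is itself an extension of $\mathrm{Stab}_{\aut L}(m)$ by $\mathrm{Stab}_{\aut Z}(l,n)$, your two counts of $|W|$ collapse term--by--term to the same expression, and equating them yields a tautology rather than the proposition.

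The argument in \cite{XX2006} bypasses $|W|$ entirely and acts with $\aut L$ alone on $\Hom(M,L)_{Z[1]}$. One first checks that the $\aut L$--orbits are in bijection with $V(Z,L;M)$ (for fixed $m$, any two completions $(l,n)$ differ by an element of $\aut Z$, so passing to $V$ kills exactly this ambiguity). Then all the content is in the single stabilizer
\[
\mathrm{Stab}_{\aut L}(m)=\{c\in\aut L:mc=m\}=\bigl(1+n\Hom(Z[1],L)\bigr)\cap \aut L,
\]
where the second equality uses that $m(c-1)=0$ iff $c-1$ factors through $n$. Here the two pieces you identified enter cleanly: $|n\Hom(Z[1],L)|=\{M,L\}/(\{Z,L\}\{L,L\})$ by the long exact sequence, and the block form of $n$ shows that $1+nt$ is invertible iff its $L_1(\alpha)$--component is, so the fraction of $1+n\Hom(Z[1],L)$ lying in $\aut L$ is $|\aut L_1(\alpha)|/|\mathrm{End}\,L_1(\alpha)|$. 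Orbit--stabilizer then gives the first identity directly; the second follows symmetrically from the $\aut Z$--action on $\Hom(Z,M)_L$.
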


Using this proposition, one can easily  deduce the following
corollary.
\begin{Cor}\label{RP}
For any $X, Y$ and $L$ in $\mc$, we have
$$
\frac{|\mathrm{Hom}(Y,X[1])_{L[1]}|}{|\mathrm{Aut}X|}\cdot
\frac{\{Y,X[1]\}}{\{X,X\}}=
\frac{|\mathrm{Hom}(L,Y)_{X[1]}|}{|\mathrm{Aut}L|}\cdot
\frac{\{L,Y\}}{\{L,L\}}
$$
and
$$
\frac{|\mathrm{Hom}(Y[-1],X)_{L}|}{|\mathrm{Aut}Y|}\cdot
\frac{\{Y[-1],X\}}{\{Y,Y\}}=
\frac{|\mathrm{Hom}(X,L)_{Y}|}{|\mathrm{Aut}L|}\cdot
\frac{\{X,L\}}{\{L,L\}}.
$$
\end{Cor}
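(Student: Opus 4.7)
The plan is to obtain both identities of Corollary \ref{RP} as two specializations of a single identity extracted from Proposition \ref{mainproposition}. Its two displayed equations share the right-hand side $\sum_{\alpha\in V(Z,L;M)}|\mathrm{End}L_1(\alpha)|/|\mathrm{Aut}L_1(\alpha)|$, so equating the left-hand sides and canceling the common factor $\{Z,L\}$ from both denominators yields the intermediate identity
$$
\frac{|\mathrm{Hom}(M,L)_{Z[1]}|}{|\mathrm{Aut}L|}\cdot\frac{\{M,L\}}{\{L,L\}}=\frac{|\mathrm{Hom}(Z,M)_{L}|}{|\mathrm{Aut}Z|}\cdot\frac{\{Z,M\}}{\{Z,Z\}}.
$$
This is the real content; the two formulas in the statement are nothing but substitutions into it.

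Before specializing I would record two trivial shift invariances that are used implicitly. Since $[1]$ is an auto-equivalence of $\mc$, $|\mathrm{Aut}X|=|\mathrm{Aut}X[n]|$ for any $n\in\bbz$, and by a direct reindexing of the defining product, $\{X[n],X[n]\}=\prod_{i>0}|\mathrm{Hom}(X[n+i],X[n])|^{(-1)^i}=\{X,X\}$. Both are immediate.

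With these in hand, the first formula of the corollary is the specialization $M\mapsto Y$, $L\mapsto X[1]$, $Z\mapsto L$ (the corollary's $L$) in the intermediate identity, using $|\mathrm{Aut}X[1]|=|\mathrm{Aut}X|$ and $\{X[1],X[1]\}=\{X,X\}$ to bring the left-hand side into the form displayed. The second formula is the analogous specialization $M\mapsto X$, $Z\mapsto Y[-1]$ with $L$ unchanged, together with $|\mathrm{Aut}Y[-1]|=|\mathrm{Aut}Y|$, $\{Y[-1],Y[-1]\}=\{Y,Y\}$, and the identification $Y[-1][1]=Y$ in the cone subscript. Since Proposition \ref{mainproposition} is granted, the derivation reduces to pure bookkeeping and presents no genuine obstacle; the only mild care needed is in matching the roles of the variables and in keeping track of the shifts so that both specializations are visibly seen to come from the same intermediate identity.
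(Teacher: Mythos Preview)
Your proposal is correct and is precisely the argument the paper has in mind: the text simply says the corollary is ``easily deduced'' from Proposition~\ref{mainproposition}, and equating the two displayed formulas there (which share their right-hand side), canceling $\{Z,L\}$, and then specializing as you indicate is exactly that easy deduction. The shift-invariance remarks you record are the only bookkeeping needed, and they are as immediate as you say.
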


Let $\mathcal{A}$ be a finitary abelian category and $X, Y, L\in \mathcal{A}.$ Define $$ E(X,Y;L)=\{(f,g)\in \Hom(X,L)\times \Hom(L,Y)\mid
$$ $$ 0 \rightarrow X \xrightarrow{f} L \xrightarrow{g}Y \rightarrow
0 \mbox{ is an exact sequence}\}.$$ The group $\aut X\times \aut Y$
acts freely on $E(X,Y;L)$ and the orbit of $(f,g)\in E(X,Y;L)$ is
denoted by $(f,g)^{\wedge}:=\{(af,gc^{-1})\mid (a,c)\in \aut X\times
\aut Y\}.$ If the orbit space is denoted by $O(X,Y;L)=\{
(f,g)^{\wedge}|(f,g) \in E(X,Y;L)\}$, then the \emph{Hall number}
$\mathrm{g}_{XY}^L=|O(X,Y;L)|$. It is easy to see

$$\mathrm{g}_{XY}^L=\displaystyle\frac{|\mathcal{M}(X,L)_{Y}|}{|\aut X|}=\displaystyle\frac{|\mathcal{M}(L,Y)_{X}|}{|\aut Y|},$$ where $\mathcal{M}(X,L)_{Y}$ is the subset of
$\textrm{Hom}(X,L)$ consisting of monomorphisms $f: X
\hookrightarrow L$ whose cokernels $\textrm{Coker}(f)$ are
isomorphic to $Y$ and $\mathcal{M}(L,Y)_{X}$ is dually defined.

The equality in Corollary \ref{RP} can be considered as a generalization of  the Riedtmann-Peng formula in
abelian categories to  homologically finite triangulated categories.
Indeed, assume that $\mc=\md^b(\mathcal{A})$ for a finitary abelian
category $\mathcal{A}$ and $X, Y$ and $L\in \mathcal{A}.$
Then one
can obtain
$$
\mathrm{Hom}(Y,X[1])_{L[1]}=\mathrm{Ext}^1(Y, X)_L, \quad
\{Y,X[1]\}=|\mathrm{Hom}_{\mathcal{A}}(Y, X)|^{-1},
$$
where $\mathrm{Ext}^1(X, Y)_L$ is the set of equivalence class of
extension of Y by X with the middle term isomorphic to $L$ and
$$
g_{XY}^L=\frac{|\mathrm{Hom}(L,Y)_{X[1]}|}{|\mathrm{Aut}Y|},\quad
\{X,X\}=\{L,L\}=\{L,Y\}=0.
$$
Under the assumption, Corollary \ref{RP} is reduced to the
Riedtmann-Peng formula (\cite{Rie1994}\cite{Peng}).
$$
\frac{|\mathrm{Ext}^1(Y, X)_L|}{|\mathrm{Hom}_{\mathcal{A}}(Y,
X)|}=g_{XY}^L\cdot |\mathrm{Aut}X|\cdot
|\mathrm{Aut}Y|\cdot|\mathrm{Aut}L|^{-1}.
$$
For any $X, Y$ and $L\in \mc$, set
$$
F_{XY}^L=\frac{|\mathrm{Hom}(L,Y)_{X[1]}|}{|\mathrm{Aut}Y|}\cdot
\frac{\{L,Y\}}{\{Y,Y\}}=\frac{|\mathrm{Hom}(X,L)_{Y}|}{|\mathrm{Aut}X|}\cdot
\frac{\{X,L\}}{\{X,X\}}.
$$

\begin{theorem}(\cite{Toen2005},\cite{XX2006})\label{maintheorem123}
Let $\mathcal{H}(\mc)$ be the vector space over $\bbq$ with the
basis $\{u_{[X]}\mid X\in \mc\}$. Endowed with the multiplication
defined by
$$
u_{[X]}\cdot u_{[Y]}=\sum_{[L]}F_{XY}^L u_{[L]},$$ $\mathcal{H}(\mc)$ is
an associative algebra with the unit  $u_{[0]}$.
\end{theorem}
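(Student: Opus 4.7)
The theorem has two components: $u_{[0]}$ is a two-sided unit, and $\cdot$ is associative.  For the unit, observe that $\Hom(L,Y)_{0[1]}$ is the set of morphisms with cone $0$, i.e.\ isomorphisms; it is empty unless $L\cong Y$, and has cardinality $|\aut Y|$ when $L\cong Y$.  In the latter case $\{L,Y\}=\{Y,Y\}$ as well, so $F_{0Y}^L=\delta_{[L],[Y]}$ and $u_{[0]}\cdot u_{[Y]}=u_{[Y]}$.  The right-unit identity $u_{[X]}\cdot u_{[0]}=u_{[X]}$ is dual, using the second expression for $F_{XY}^L$.

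For associativity, fix $X,Y,Z,M\in\mc$; the goal is to prove
$$\sum_{[L]} F_{XY}^L\cdot F_{LZ}^M \;=\; \sum_{[N]} F_{YZ}^N\cdot F_{XN}^M.$$
The strategy is to interpret both sides as weighted counts of octahedral diagrams on $(X,Y,Z,M)$.  The octahedral axiom provides the geometric bijection: a triangle $X\to L\to Y\to X[1]$ together with a triangle $L\to M\to Z\to L[1]$ sharing $L$ determines, uniquely up to isomorphism, an object $N$ and triangles $X\to M\to N\to X[1]$ and $Y\to N\to Z\to Y[1]$ sharing $N$ and completing the picture to an octahedron, and the converse construction recovers the first pair from the second.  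Expanding each $F$ via the two equivalent formulas converts both sides into sums of products of morphism counts $|\Hom(-,-)_{-}|$ weighted by Euler factors $\{-,-\}$.

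The combinatorial core is Proposition \ref{mainproposition}: its two equations each reduce a weighted morphism count attached to one end of a triangle $Z\to M\to L\to Z[1]$ to the common expression $\sum_{\alpha}|\ed L_1(\alpha)|/|\aut L_1(\alpha)|$ over the orbit space $V(Z,L;M)$.  Applying it (and suitable rotations of triangles) to the LHS expresses $\sum_{[L]}F_{XY}^L F_{LZ}^M$ as a weighted sum over octahedral orbits indexed by the ``$L$-side'' of the octahedron; a symmetric application to the RHS produces the same shape of sum indexed by the ``$N$-side'', and the octahedral axiom identifies these two indexing sets in a weight-preserving bijection.  To recombine Euler factors across the octahedron one uses the multiplicativity $\{A,W\}\{B,W\}=\{L,W\}$ (and its dual) associated with any triangle $A\to L\to B\to A[1]$, an immediate consequence of the long exact Hom-sequence and local homological finiteness.

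The main obstacle is the bookkeeping of the Euler factors together with the non-trivial stabilizers of the $\aut X\times\aut Y$-action on the triple $(f,g,h)\in W(X,Y;L)$: these stabilizers are precisely what is encoded in the Krull--Schmidt splitting $L=L_1(\alpha)\oplus L_2(\alpha)$ according to whether the connecting morphism is an isomorphism or lies in the radical.  Once this splitting is in hand, the equality of the two sides is forced by the fact that Proposition \ref{mainproposition} yields \emph{the same} right-hand side $\sum_{\alpha}|\ed L_1(\alpha)|/|\aut L_1(\alpha)|$ from the two a priori different left-hand sides.  I therefore expect the proof to reduce, after this setup, to careful but mechanical matching of factors, with Proposition \ref{mainproposition} doing all of the nontrivial work.
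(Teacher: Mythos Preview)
Your unit argument is fine, and you have correctly identified the two ingredients (the octahedral axiom and Proposition~\ref{mainproposition}), but your plan for associativity diverges from the paper's and is vaguer at the crucial step.

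The paper's argument (sketched in Section~4) does \emph{not} apply Proposition~\ref{mainproposition} separately to each $F$-factor and then biject two families of ``octahedral orbits.'' Instead it first makes a purely formal rewriting: using $\Hom(M\oplus X,L)=\Hom(M,L)\times\Hom(X,L)$ and $\{M\oplus X,L\}=\{M,L\}\{X,L\}$, one checks directly that
\[
\sum_{[L]}F_{XY}^{L}F_{ZL}^{M}
=\frac{1}{|\aut X|\{X,X\}}\sum_{[L],[L']}\frac{|\Hom(M\oplus X,L)^{Y,Z[1]}_{L'[1]}|}{|\aut L|}\cdot\frac{\{M\oplus X,L\}}{\{L,L\}},
\]
and symmetrically for the other side in terms of $|\Hom(L',M\oplus X)^{Y,Z[1]}_{L}|$. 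The point is that the octahedron produces a single ``folded'' triangle
\[
L'\xrightarrow{(f',-m')} M\oplus X\xrightarrow{\binom{m}{f}} L\longrightarrow L'[1],
\]
and the two expressions above are exactly the two sides of Proposition~\ref{mainproposition} applied \emph{once} to this triangle (this is what the paper calls \textbf{the symmetry-I}). So Proposition~\ref{mainproposition} is invoked a single time, on a triangle you never wrote down, and no separate weight-matching across an octahedral bijection is needed.

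Your proposed route---reduce each side to a weighted sum over orbit spaces $V(-,-;-)$ and then biject via the octahedral axiom---may well be made to work, but you would need to specify exactly which orbit space both sides are being reduced to and why the weights $|\ed L_1(\alpha)|/|\aut L_1(\alpha)|$ agree under the bijection; as written this is where your sketch is thinnest. The paper's packaging into $M\oplus X$ sidesteps all of that bookkeeping.
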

The algebra $\mathcal{H}(\mc)$ is called the derived Hall algebra when $\mc$ is a derived category. Here, we also use this name for a general left homologically finite triangulated category.

Now we define the Drinfeld dual of $\mathcal{H}(\mc)$. Set $$h_{L}^{XY}=|\mathrm{Hom}_{\mc}(Y, X[1])_{L}|\cdot \{Y, X[1]\}.$$
Define a map $\delta: \mathcal{H}(\mc)\rightarrow \mathcal{H}(\mc)\widehat{\otimes}\mathcal{H}(\mc)$ by
$$
\delta(u_{[L]})=\sum_{[X], [Y]}h_{L}^{XY}u_{[X]}\otimes u_{[Y]}.
$$
Define a symmetric bilinear form:
$$
(u_{[X]}, u_{[Y]})=\delta_{[X], [Y]}\frac{1}{|\mathrm{Aut}X|\{X, X\}}
$$
It induces bilinear forms
$$
(\mathcal{H}(\mc)\widehat{\otimes}\mathcal{H}(\mc))\times (\mathcal{H}(\mc)\otimes\mathcal{H}(\mc))\rightarrow \mathbb{Q} \mbox{  and }(\mathcal{H}(\mc)\otimes\mathcal{H}(\mc))\times (\mathcal{H}(\mc)\widehat{\otimes}\mathcal{H}(\mc))\rightarrow \mathbb{Q}
$$  by setting
$$
(a_1\otimes a_2, b_1\otimes b_2)=(a_1, b_1)(a_2, b_2).
$$
Set $t_{[X]}=\frac{1}{|\mathrm{Aut}X|\{X, X\}}$. Then the derive Riedtmann-Peng formula in Corollary \ref{RP} can be written as
$$
h_{L}^{XY}t_{[X]}t_{[Y]}=F_{XY}^{L}t_{[L]}
$$
for any $X, Y$ and $L$ in $\mc$.
Using Proposition \ref{coproduct-Lemma}, we have
$$ (a, bc)=(\delta(a), b\otimes c),\quad\forall a, b, c\in \mathcal{H}(\mc).$$ The Drinfeld dual algebra is a $\bbq$-space $\mathcal{H}^{Dr}(\mc)$ with the basis $\{v_{[X]}\mid X\in \mc\}$ and the multiplication
\begin{eqnarray}
  v_{[X]}*v_{[Y]}=\sum_{[L]}h_{L}^{XY}v_{[L]} &=& \{Y,X[1]\}\cdot
\sum_{[L]}|\mathrm{Hom}(Y,X[1])_{L[1]}| v_{[L]}\nonumber \\
   &=& \{Y[-1],X\}\cdot
\sum_{[L]}|\mathrm{Hom}(Y[-1],X)_{L}| v_{[L]}.\nonumber
\end{eqnarray}
In \cite{KS}, Kontsevich and Soibelman defined the motivic Hall
algebra for an ind-constructible triangulated $A_{\infty}$-category.
We can define the finite field version of a motivic Hall algebra for a homologically finite $k$-additive
triangulated category, which is just $\mathcal{H}^{Dr}(\mc)$.
Following Theorem \ref{dual}, we have the immediate result.
\begin{Cor}\cite[Proposition 6.12]{KS}
The map $\Phi:
\mathcal{H}^{Dr}(\mathcal{C})\rightarrow
\mathcal{H}(\mathcal{C})$ by $\Phi(v_{[X]})=|\mathrm{Aut}X|\cdot\{X,
X\}\cdot u_{[X]}$ for any $X\in \mathcal{C}$ is an algebraic isomorphism between
$\mathcal{H}^{Dr}(\mathcal{C})$ and
$\mathcal{H}(\mathcal{C})$.
\end{Cor}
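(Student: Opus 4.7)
The plan is to obtain this corollary as an immediate instance of Theorem~\ref{dual} applied to $A = \mathcal{H}(\mc)$, with the basis $\{u_{[X]}\}$, comultiplication $\delta$, bilinear form $(-,-)$, and scalars $t_{[X]} = 1/(|\mathrm{Aut}X|\{X,X\})$ exactly as introduced in the paragraphs preceding the statement. Under this identification $t_{[X]}^{-1} = |\mathrm{Aut}X|\{X,X\}$, so the prescribed map $\Phi(v_{[X]}) = |\mathrm{Aut}X|\{X,X\}\, u_{[X]}$ is literally the isomorphism $v_\alpha \mapsto t_\alpha^{-1} u_\alpha$ produced by Theorem~\ref{dual}.

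Before invoking the theorem I would first check the standing hypotheses from Section~2. Non-degeneracy of $(-,-)$ is clear, because the form is diagonal with nonzero scalars $t_{[X]}$. The finiteness condition for $\delta$—that for each fixed pair $[X], [Y]$ only finitely many $[L]$ satisfy $h_L^{XY}\neq 0$—follows from condition (1) on $\mc$: the set $\mathrm{Hom}(Y, X[1])$ is finite, and each of its elements has a unique mapping cone up to isomorphism, so only finitely many candidate classes $[L]$ arise.

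The central step is the compatibility $(a, bc) = (\delta(a), b\otimes c)$ on $\mathcal{H}(\mc)$. By Proposition~\ref{coproduct-Lemma}, this is equivalent to the scalar identity
\[
h_L^{XY}\, t_{[X]}\, t_{[Y]} \;=\; F_{XY}^L\, t_{[L]}
\]
for all $X, Y, L \in \mc$. Substituting the definitions of $h_L^{XY}$, $F_{XY}^L$ and $t_{[\cdot]}$ and clearing the $|\mathrm{Aut}|$ and $\{-,-\}$ factors, this collapses to the first equality of Corollary~\ref{RP}, namely the derived Riedtmann-Peng formula. This is the only nontrivial input, and all the analytic work underpinning it is already packaged in Corollary~\ref{RP} via Proposition~\ref{mainproposition} from \cite{XX2006}; here I can simply invoke it.

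With the three hypotheses in place, Theorem~\ref{dual} immediately supplies the algebra isomorphism $\Phi: \mathcal{H}^{Dr}(\mc) \to \mathcal{H}(\mc)$ with $\Phi(v_{[X]}) = t_{[X]}^{-1} u_{[X]} = |\mathrm{Aut}X|\{X,X\}\, u_{[X]}$, which is the claim. No serious obstacle arises: the heavy lifting has already been done by Corollary~\ref{RP}, and what remains is purely bookkeeping to ensure that the positions of the $\{-,-\}$ and $|\mathrm{Aut}|$ factors match on both sides of the numerical identity.
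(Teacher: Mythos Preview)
Your proposal is correct and follows exactly the paper's approach: the paper sets up $t_{[X]}$, verifies $h_L^{XY}t_{[X]}t_{[Y]}=F_{XY}^L t_{[L]}$ via Corollary~\ref{RP}, invokes Proposition~\ref{coproduct-Lemma}, and then simply says ``Following Theorem~\ref{dual}, we have the immediate result.'' Your write-up is just a more explicit unpacking of that one-line justification, with the hypotheses spelled out.
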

Then Theorem \ref{maintheorem123} implies that the algebra $\mathcal{H}^{Dr}(\mathcal{C})$ is an associative algebra.
 \bigskip

In order to introduce the extended twisted derived Hall algebra $\mathcal{H}_{et}(\mc)$ of $\mathcal{H}(\mc),$ we need  more assumptions. Assume that $\mc$ is homologically finite, i.e.,
$$\sum_{i\in \bbz}\dim_k\Hom(X[i],Y)<\infty$$
for any $X$ and $Y$ in $\mc.$  For example, the derived category $\mc=\md^b(\mathcal{A})$ for a small finitary abelian category $\mathcal{A}$ is homologically finite. Let $K(\mc)$ be the Grothendieck group of $\mc$, one can define a bilinear form $\langle-, -\rangle: K(\mc)\times K(\mc)\rightarrow \bbz$ by setting
$$
\langle[X], [Y]\rangle=\sum_{i\in\bbz}(-1)^i\mathrm{dim}_{k}\mathrm{Hom}_{\mc}(X, Y[i])
$$
for $X, Y\in \mc.$ It induces the symmetric bilinear form $(-, -)$ on $K(\mc)$ by defining $([X], [Y])=\langle[X], [Y]\rangle+\langle[Y], [X]\rangle.$ For convenience, for any
object $X\in \mc,$  we use the same notation $[X]$ for the isomorphism class and its image in $K(\mc)$. It is easy to check that this bilinear form is well-defined.
Set $v=\sqrt{q}$. Then $\mathcal{H}_{et}(\mc)$ is given by the $\bbq(v)$-space with basis $\{K_{\alpha}u_{[X]}\mid \alpha\in K(\mc), X\in \mc\}$ and the multiplication defined by
$$
(K_{\alpha}u_{[X]})\circ (K_{\beta}u_{[Y]})=v^{\langle[X], [Y]\rangle-(\beta, [X])}K_{\alpha+\beta}u_{[X]}\cdot u_{[Y]}.
$$
Note that $K_0=u_0=1.$
\begin{Prop}
The algebra $H_{et}(\mc)$ is associative.
\end{Prop}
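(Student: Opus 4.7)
The strategy is to reduce associativity of $\mathcal{H}_{et}(\mc)$ to the already-proved associativity of $\mathcal{H}(\mc)$ (Theorem \ref{maintheorem123}) by checking that the twisting is a $2$-cocycle. First I would record the key compatibility between the Hall multiplication and the Grothendieck group: if $F_{XY}^{L}\neq 0$, then there is a triangle $X\to L\to Y\to X[1]$ in $\mc$, so $[L]=[X]+[Y]$ in $K(\mc)$. Consequently, in the sum $u_{[X]}\cdot u_{[Y]}=\sum_{[L]}F_{XY}^{L}u_{[L]}$, every term on the right has Grothendieck class $[X]+[Y]$. This is the one structural fact the whole proof rests on.

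Next I would expand both triple products on arbitrary basis elements $K_{\alpha}u_{[X]}, K_{\beta}u_{[Y]}, K_{\gamma}u_{[Z]}$. Using the definition twice, together with $[L]=[X]+[Y]$ whenever $F_{XY}^{L}\neq 0$, one obtains
$$
\bigl((K_{\alpha}u_{[X]})\circ(K_{\beta}u_{[Y]})\bigr)\circ(K_{\gamma}u_{[Z]})=v^{A}\,K_{\alpha+\beta+\gamma}\bigl((u_{[X]}\cdot u_{[Y]})\cdot u_{[Z]}\bigr)
$$
with exponent
$$
A=\langle[X],[Y]\rangle+\langle[X]+[Y],[Z]\rangle-(\beta,[X])-(\gamma,[X]+[Y]).
$$
Similarly, using $[M]=[Y]+[Z]$ whenever $F_{YZ}^{M}\neq 0$, the other bracketing gives $v^{B}K_{\alpha+\beta+\gamma}(u_{[X]}\cdot(u_{[Y]}\cdot u_{[Z]}))$ with
$$
B=\langle[Y],[Z]\rangle+\langle[X],[Y]+[Z]\rangle-(\gamma,[Y])-(\beta+\gamma,[X]).
$$

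The main, and in fact only nontrivial, step is the identity $A=B$. This is a purely formal computation: expanding both sides using bilinearity of $\langle-,-\rangle$ and $(-,-)$ yields
$$
A=B=\langle[X],[Y]\rangle+\langle[X],[Z]\rangle+\langle[Y],[Z]\rangle-(\beta,[X])-(\gamma,[X])-(\gamma,[Y]),
$$
so the scalar twist is symmetric in the two bracketings. Once $A=B$ is in hand, we have
$$
\bigl((K_{\alpha}u_{[X]})\circ(K_{\beta}u_{[Y]})\bigr)\circ(K_{\gamma}u_{[Z]})-(K_{\alpha}u_{[X]})\circ\bigl((K_{\beta}u_{[Y]})\circ(K_{\gamma}u_{[Z]})\bigr)=v^{A}K_{\alpha+\beta+\gamma}\bigl((u_{[X]}u_{[Y]})u_{[Z]}-u_{[X]}(u_{[Y]}u_{[Z]})\bigr),
$$
which vanishes by Theorem \ref{maintheorem123}. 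The potential obstacle is really just bookkeeping: one has to justify replacing $\langle[L],[Z]\rangle$ by $\langle[X]+[Y],[Z]\rangle$ inside the sum over $[L]$, which is legitimate because of the Grothendieck-class constraint noted in the first step; once this is established, the $2$-cocycle identity $A=B$ is immediate from bilinearity and the proof is complete.
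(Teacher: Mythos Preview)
Your proposal is correct and is essentially identical to the paper's own proof: both expand the two triple products on basis elements, use $[L]=[X]+[Y]$ in $K(\mc)$ whenever $F_{XY}^{L}\neq 0$ to pull the exponent outside the sum, verify the resulting exponents $A$ and $B$ agree by bilinearity, and then invoke Theorem~\ref{maintheorem123}. The only difference is presentational---you frame the scalar check explicitly as a $2$-cocycle identity, whereas the paper simply writes out the chain of equalities---but the mathematics is the same.
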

\begin{proof}By definition, we have
\begin{eqnarray}
   && [(K_{\alpha}u_{[X]})\circ (K_{\beta}u_{[Y]})]\circ(K_{\gamma}u_{[Z]})\nonumber \\
   &=& [v^{\langle[X], [Y]\rangle-(\beta,[X])}K_{\alpha+\beta}u_{[X]}\cdot u_{[Y]}]\circ (K_{\gamma}u_{[Z]})\nonumber \\
   &=& v^{\langle[X], [Y]\rangle+\langle[L], [Z]\rangle-(\beta,[X])-(\gamma, [L])}K_{\alpha+\beta+\gamma}\sum_{[L]}F_{XY}^L u_{[L]}\cdot u_{[Z]} \nonumber\\
   &=& v^{\langle[X], [Y]\rangle+\langle[X]+[Y], [Z]\rangle-(\beta,[X])-(\gamma, [X]+[Y])}K_{\alpha+\beta+\gamma}(u_{[X]}\cdot u_{[Y]})\cdot u_{[Z]}\nonumber\\
   &=& v^{\langle[X], [Y]+[Z]\rangle+\langle[Y], [Z]\rangle-(\beta+\gamma,[X])-(\gamma, [Y])}K_{\alpha+\beta+\gamma}u_{[X]}\cdot (u_{[Y]}\cdot u_{[Z]})\nonumber\\
   &=& (K_{\alpha}u_{[X]})\circ [(K_{\beta}u_{[Y]})]\circ(K_{\gamma}u_{[Z]})].\nonumber
\end{eqnarray}
\end{proof}
Dually, one can define $\mathcal{H}_{et}^{-}(\mc)$ with the basis $\{K_{\alpha}u^{-}_{[X]}\mid \alpha\in K(\mc), X\in \mc\}$ and the multiplication
$$
(K_{\alpha}u^{-}_{[X]})\circ (K_{\beta}u^{-}_{[Y]})=v^{\langle[X], [Y]\rangle+(\beta, [X])}K_{\alpha+\beta}u^{-}_{[X]}\cdot u^{-}_{[Y]},
$$
where
$$
u^{-}_{[X]}\cdot u^{-}_{[Y]}=\sum_{[L]}F_{XY}^{L}u^{-}_{[L]}.
$$
There exist a map $\delta: \mathcal{H}_{et}(\mc)\rightarrow \mathcal{H}_{et}(\mc)\widehat{\otimes}\mathcal{H}_{et}(\mc)$ by setting
$$
\delta(K_{\gamma}u_{[L]})=\sum_{[X], [Y]}v^{\langle[X], [Y]\rangle}h_{L}^{XY}K_{\gamma}u_{[X]}K_{[Y]}\otimes K_{\gamma}u_{[Y]}
$$
and  a bilinear form on $\mathcal{H}_{et}(\mc)\times \mathcal{H}^{-}_{et}(\mc)$ defined by
$$
(K_{\alpha}u_{[X]}, K_{\beta}u^{-}_{[Y]})=v^{-(\alpha, \beta)-(\beta, [X])+(\alpha, [Y])}\delta_{[X], [Y]}\frac{1}{|\mathrm{Aut}X|\{X, X\}}.
$$
It naturally induces a bilinear form $$(\mathcal{H}_{et}(\mc)\widehat{\otimes}\mathcal{H}_{et}(\mc))\times (\mathcal{H}^{-}_{et}(\mc)\otimes\mathcal{H}^{-}_{et}(\mc))\rightarrow \mathbb{Q}(v).$$
\begin{Prop}
For any $a\in \mathcal{H}_{et}(\mc)$ and $b, c\in \mathcal{H}_{et}^{-}(\mc)$, we have $$(a, bc)=(\delta(a), b\otimes c).$$
\end{Prop}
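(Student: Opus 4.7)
The plan is to reduce by linearity in each of $a,b,c$ to the case where $a = K_\gamma u_{[L]}$, $b = K_\alpha u^{-}_{[X]}$, and $c = K_{\alpha'} u^{-}_{[X']}$ are basis elements, and then verify the equality by direct computation, invoking the identity
$$
h_L^{XY} t_{[X]} t_{[Y]} = F_{XY}^L t_{[L]}
$$
recorded just before this proposition as a rewriting of the derived Riedtmann-Peng formula (Corollary \ref{RP}).

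For the left-hand side, I would first expand $bc$ via the twisted multiplication of $\mathcal{H}_{et}^{-}(\mc)$ to obtain $v^{\langle[X],[X']\rangle + (\alpha',[X])} K_{\alpha+\alpha'}\sum_{[L']} F_{XX'}^{L'} u^{-}_{[L']}$, and then pair with $a$. The factor $\delta_{[L],[L']}$ in the bilinear form selects $[L'] = [L]$, and whenever $F_{XX'}^L \neq 0$ the triangle $X \to L \to X' \to X[1]$ forces $[L] = [X]+[X']$ in $K(\mc)$. The left-hand side thereby reduces to $v^{E_1} F_{XX'}^L t_{[L]}$ for an explicit exponent $E_1 = E_1(\gamma,\alpha,\alpha',[X],[X'])$.

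For the right-hand side, I would expand $\delta(a)$, interpreting each $K_\gamma u_{[X'']} K_{[Y'']}$ by applying the circle product to $K_\gamma u_{[X'']}$ and $K_{[Y'']} u_{[0]}$; since $\langle [X''],[0]\rangle = 0$ and $u_{[X'']}\cdot u_{[0]} = u_{[X'']}$, this equals $v^{-([Y''],[X''])} K_{\gamma+[Y'']} u_{[X'']}$. Pairing with $b\otimes c$, the two Kronecker deltas from the bilinear form select $[X'']=[X]$ and $[Y'']=[X']$, producing $v^{E_2} h_L^{XX'} t_{[X]} t_{[X']}$; the displayed identity then converts this to $v^{E_2} F_{XX'}^L t_{[L]}$.

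The main obstacle is the bookkeeping required to verify $E_1 = E_2$. Expanding both exponents by bilinearity of $(-,-)$ and substituting $[L]=[X]+[X']$, the equality should follow after using the symmetry $([X'],\alpha) = (\alpha,[X'])$ and the splitting $(\gamma+[X'],[X]) = (\gamma,[X]) + ([X'],[X])$, so that the $v^{-([X'],[X])}$ twist introduced by the reordering inside $\delta(a)$ cancels precisely against the corresponding term coming from the first bilinear-form factor. This cancellation is what the twisting coefficient $v^{\langle[X],[Y]\rangle}$ built into the definition of $\delta$ on $\mathcal{H}_{et}(\mc)$ was designed to produce.
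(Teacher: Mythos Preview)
Your proposal is correct and follows essentially the same route as the paper's proof: reduce to basis elements, expand both sides directly using the definitions of the twisted multiplication, $\delta$, and the bilinear form, interpret $K_\gamma u_{[X]}K_{[Y]}$ via the $\circ$-product exactly as you do, and then match the two expressions using $h_L^{XY}t_{[X]}t_{[Y]}=F_{XY}^L t_{[L]}$ together with a direct check that the $v$-exponents agree. The paper carries out the exponent bookkeeping explicitly rather than leaving it as a sketch, but the structure and key ingredients are identical.
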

\begin{proof}
It is enough to consider the case when $a=K_{\gamma}u_{[L]}, b=K_{\alpha}u^{-}_{[X]}$ and $c=K_{\beta}u^{-}_{[Y]}.$ By definition,
$$
(K_{\gamma}u_{[L]}, (K_{\alpha}u^{-}_{[X]})\circ (K_{\beta}u^{-}_{[Y]}))$$$$=v^{\langle[X], [Y]\rangle+(\beta, [X])}(K_{\gamma}u_{[L]}, K_{\alpha+\beta}u^{-}_{[X]}\cdot u^{-}_{[Y]})
$$
$$
\hspace{-0.2cm}=v^{\langle[X], [Y]\rangle+(\beta, [X])}F_{XY}^{L}(K_{\gamma}u_{[L]}, K_{\alpha+\beta}u^{-}_{[L]})
$$
$$
\hspace{1.3cm}=v^{\langle[X], [Y]\rangle+(\beta, [X])-(\gamma, \alpha+\beta)-(\alpha+\beta, L)+(\gamma, [L])}F_{XY}^L\cdot t_{[L]}.
$$
$$
\hspace{1.9cm}=v^{\langle[X], [Y]\rangle-(\gamma, \alpha+\beta-[X]-[Y])-(\alpha, [X]+[Y])-(\beta, [Y])}F_{XY}^L\cdot t_{[L]}.
$$
Similarly, by definition, we have
$$
\hspace{-6cm}(\delta(K_{\gamma}u_{[L]}), (K_{\alpha}u^{-}_{[X]})\otimes (K_{\beta}u^{-}_{[Y]}))$$$$\hspace{-2cm}=h_{L}^{XY}v^{\langle[X], [Y]\rangle}((K_{\gamma}u_{[X]}K_{[Y]})\otimes (K_{\gamma}u_{[Y]}), (K_{\alpha}u^{-}_{[X]})\otimes (K_{\beta}u^{-}_{[Y]}))$$$$\hspace{-2cm}=h_{L}^{XY}v^{\langle[X], [Y]\rangle-([X], [Y])}(K_{\gamma+[Y]}u_{[X]}, K_{\alpha}u^{-}_{[X]})\cdot (K_{\gamma}u_{[Y]}, K_{\beta}u^{-}_{[Y]})$$
$$
\hspace{0.3cm}=h_{L}^{XY}v^{\langle[X], [Y]\rangle-([X], [Y])-(\gamma+[Y], \alpha)-(\alpha, [X])+(\gamma+[Y], [X])-(\gamma, \beta)-(\beta, [Y])+(\gamma, [Y])}t_{[X]}t_{[Y]}.
$$
$$
\hspace{-3.6cm}=v^{\langle[X], [Y]\rangle+(\beta, [X])-(\gamma, \alpha+\beta)-(\alpha+\beta, L)+(\gamma, [L])}h_{L}^{XY}t_{[X]}t_{[Y]}.
$$
This completes the proof.
\end{proof}
\begin{Prop}
Let $\mathcal{H}^{Dr}_{et}$ be the $\bbq(v)$-space with the basis $\{K_{\alpha}\theta_{[X]}\mid \alpha\in K(\mc), X\in \mc\}$ and
the multiplication given by
$$
K_{\alpha}\theta_{[X]}*K_{\beta}\theta_{[Y]}=v^{(\beta, [X])-2(\alpha, \beta)}\sum_{[L]}v^{\langle[X], [Y]\rangle}h_{L}^{XY}K_{\alpha+\beta}\theta_{[L]}.
$$
Then the map $\Phi: \mathcal{H}^{Dr}_{et}\rightarrow \mathcal{H}^{-}_{et}$ defined by $\Phi(K_{\alpha}\theta_{[X]})=v^{(\alpha, \alpha)}t^{-1}_{[X]}K_{\alpha}u^{-}_{[X]}$ is an isomorphism of algebras.
\end{Prop}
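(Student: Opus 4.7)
My plan is to mimic the strategy used in the proof of Theorem~\ref{dual}: the bijectivity of $\Phi$ is immediate because it sends the basis $\{K_{\alpha}\theta_{[X]}\}$ to $\{v^{(\alpha,\alpha)} t_{[X]}^{-1} K_{\alpha}u^{-}_{[X]}\}$, which is the basis $\{K_{\alpha}u^{-}_{[X]}\}$ rescaled by nonzero scalars. So the content of the statement is the multiplicativity $\Phi(K_{\alpha}\theta_{[X]} * K_{\beta}\theta_{[Y]}) = \Phi(K_{\alpha}\theta_{[X]}) \circ \Phi(K_{\beta}\theta_{[Y]})$. Both sides expand as $\bbq(v)$-linear combinations of $K_{\alpha+\beta} u^{-}_{[L]}$, so it suffices to compare coefficients $[L]$-by-$[L]$ and reduce the resulting scalar identity to the derived Riedtmann-Peng formula $h_{L}^{XY} t_{[X]} t_{[Y]} = F_{XY}^{L} t_{[L]}$ already recorded in Section~3.

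First I would unpack the left-hand side. Applying $\Phi$ termwise to the defining expansion of the $*$-product, the coefficient of $K_{\alpha+\beta} u^{-}_{[L]}$ is
\[
v^{(\beta,[X]) - 2(\alpha,\beta) + \langle[X],[Y]\rangle + (\alpha+\beta,\alpha+\beta)}\, h_{L}^{XY}\, t_{[L]}^{-1}.
\]
Expanding $(\alpha+\beta,\alpha+\beta) = (\alpha,\alpha) + 2(\alpha,\beta) + (\beta,\beta)$ cancels the $-2(\alpha,\beta)$ term, simplifying the exponent to $(\alpha,\alpha) + (\beta,\beta) + \langle[X],[Y]\rangle + (\beta,[X])$.

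Next I would compute the right-hand side directly. Factoring $v^{(\alpha,\alpha)+(\beta,\beta)} t_{[X]}^{-1} t_{[Y]}^{-1}$ out of $\Phi(K_{\alpha}\theta_{[X]}) \circ \Phi(K_{\beta}\theta_{[Y]})$, applying the defining formula for $\circ$ in $\mathcal{H}_{et}^{-}(\mc)$, and substituting $u^{-}_{[X]} \cdot u^{-}_{[Y]} = \sum_{[L]} F_{XY}^{L} u^{-}_{[L]}$, the coefficient of $K_{\alpha+\beta} u^{-}_{[L]}$ becomes
\[
v^{(\alpha,\alpha) + (\beta,\beta) + \langle[X],[Y]\rangle + (\beta,[X])}\, F_{XY}^{L}\, t_{[X]}^{-1} t_{[Y]}^{-1}.
\]
The $v$-exponents agree with those on the left-hand side, so the required identity collapses to $h_{L}^{XY} t_{[L]}^{-1} = F_{XY}^{L} t_{[X]}^{-1} t_{[Y]}^{-1}$, which is exactly the derived Riedtmann-Peng formula of Corollary~\ref{RP} with the definition $t_{[X]} = 1/(|\mathrm{Aut}X|\{X,X\})$.

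The main obstacle is purely bookkeeping with exponents of $v$: the quadratic term $-2(\alpha,\beta)$ in the definition of $*$ is engineered precisely to compensate for the quadratic discrepancy between $(\alpha+\beta,\alpha+\beta)$ and $(\alpha,\alpha)+(\beta,\beta)$ introduced by $\Phi$, and once this cancellation is recognized, the remaining linear terms in $\alpha,\beta,[X],[Y]$ match automatically. There is no new homological content; the argument is the extended-twisted analogue of Theorem~\ref{dual}, with the Grothendieck-group factors $K_{\gamma}$ carried along as passive symbols.
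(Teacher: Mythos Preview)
Your proposal is correct and follows exactly the natural approach: the paper does not supply a proof for this proposition, but the verification you outline---reducing multiplicativity to the derived Riedtmann--Peng identity $h_{L}^{XY}t_{[X]}t_{[Y]}=F_{XY}^{L}t_{[L]}$ after checking that the $v$-exponents match---is precisely the extended-twisted analogue of Theorem~\ref{dual} that the context makes clear is intended. Your bookkeeping with the quadratic and linear terms in the exponents is accurate.
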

By this proposition, we can view $\mathcal{H}^{Dr}_{et}$ as the Drinfeld double of $\mathcal{H}^{-}_{et}.$

\section{Two symmetries}
Consider the following commutative diagram in $\mc,$ which is a pushout and a pullback in the mean time,
$$
\xymatrix{L'\ar[d]^{m'}\ar[r]^{f'}&M\ar[d]^{m}\\X\ar[r]^{f}&L}
$$
Applying the Octahedral axiom, one can obtain the following commutative
diagram:
\begin{equation}\label{maindiagramme}
\xymatrix{Z\ar@{=}[r]\ar@{.>}[d]^{l'} &Z\ar[d]^l\\
L'\ar[r]^{f'}\ar[d]^{m'} &M\ar@{.>}[r]^{g'}\ar[d]^m &Y\ar@{.>}[r]^-{h'}\ar@{=}[d] &L'[1]\ar[d]^{m'[1]}\\
X\ar[r]^f\ar@{.>}[d]^{n'} &L\ar[r]^g\ar[d]^n &Y\ar[r]^-{h} &X[1]\\
Z[1]\ar@{=}[r] &Z[1]}
\end{equation}
with rows and columns being distinguished triangles and a distinguished triangle
\begin{equation}\label{maintriangle}
\xymatrix{L'\ar[rr]^-{(%
\begin{array}{cc}
  f' & -m' \\
\end{array}%
)}&&M\oplus X\ar[rr]^-{\left(%
\begin{array}{c}
  m \\
  f \\
\end{array}%
\right)}&&L\ar[rr]^{\begin{array}{c}
  \theta \\
\end{array}}&&L'[1].}
\end{equation}

The above triangle induces two sets
$$
\mathrm{Hom}(M\oplus X,L)^{Y,Z[1]}_{L'[1]}:=\{ \left(\begin{array}{c}
  m \\
  f \\
\end{array}\right) \in
\mathrm{Hom}(M\oplus X,L)\mid$$$$\hspace{1cm} Cone(f)\simeq Y,
Cone(m)\simeq Z[1] \mbox{ and }Cone \left(\begin{array}{c}
  m \\
  f \\
\end{array}\right)\simeq L'[1]\}
$$
and
$$
\mathrm{Hom}(L',M\oplus X)^{Y,Z[1]}_{L}:=\{(f',-m')\in
\mathrm{Hom}(L',M\oplus X)\mid $$$$\hspace{1cm}Cone(f')\simeq Y,
Cone(m')\simeq Z[1] \mbox{ and } Cone(f',-m')\simeq L\}.
$$
\\
\noindent\textbf{The symmetry-I}: The orbit space of
$\mathrm{Hom}(M\oplus X,L)^{Y,Z[1]}_{L'[1]}$ under the action of
$\mathrm{Aut} L$ and the orbit space of
$\mathrm{Hom}(L',M\oplus X)^{Y,Z[1]}_{L}$ under the action of
$\mathrm{Aut} L'$ coincide. More explicitly,
the symmetry implies the identity:
$$
\frac{|\mathrm{Hom}(M\oplus X,L)^{Y,Z[1]}_{L'[1]}|}{|\mathrm{Aut}
L|}\frac{\{M\oplus
X,L\}}{\{L',L\}\{L,L\}}=\frac{|\mathrm{Hom}(L',M\oplus
X)^{Y,Z[1]}_{L}|}{|\mathrm{Aut} L'|}\frac{\{L',M\oplus
X\}}{\{L',L\}\{L',L'\}}.
$$
\begin{proof}
The equality is a direct application of Proposition
\ref{mainproposition} to the Triangle \ref{maintriangle}.
\end{proof}

Roughly speaking, \textbf{the symmetry-I} compares
 $$\xymatrix{L'\ar[rr]^-{(%
\begin{array}{cc}
  f' & -m' \\
\end{array}%
)}&&M\oplus X&\mbox{and}&M\oplus X\ar[rr]^-{\left(%
\begin{array}{c}
  m \\
  f \\
\end{array}%
\right)}&&L}$$ in the Triangle \ref{maintriangle}.

The diagram
\ref{maindiagramme} induces \textbf{a new symmetry} which compares
 $$\xymatrix@C=0.5cm{
   L' \ar[r]^{f'}& M\ar[r]^{m}&L   & \mbox{and} &
   L' \ar[r]^{m'} & X\ar[r]^{f}&L }.$$
Using the derived Riedtmann-Peng formula (Corollary \ref{RP}), we
have
$$
\frac{|\mathrm{Hom}(Y[-1], L')_{M}|}{|\mathrm{Aut}Y|}\cdot
\frac{\{Y[-1],L'\}}{\{Y,Y\}}=
\frac{|\mathrm{Hom}(L',M)_{Y}|}{|\mathrm{Aut}M|}\cdot
\frac{\{L',M\}}{\{M,M\}},
$$
$$
\frac{|\mathrm{Hom}(L,Z[1])_{M[1]}|}{|\mathrm{Aut}Z|}\cdot
\frac{\{L,Z[1]\}}{\{Z,Z\}}=
\frac{|\mathrm{Hom}(M,L)_{Z[1]}|}{|\mathrm{Aut}M|}\cdot
\frac{\{M,L\}}{\{M,M\}},
$$
$$
\frac{|\mathrm{Hom}(X, Z[1])_{L'[1]}|}{|\mathrm{Aut}Z|}\cdot
\frac{\{X,Z[1]\}}{\{Z,Z\}}=
\frac{|\mathrm{Hom}(L',X)_{Z[1]}|}{|\mathrm{Aut}L'|}\cdot
\frac{\{L',X\}}{\{L',L'\}},
$$
and
$$
\frac{|\mathrm{Hom}(Y[-1],X)_{L}|}{|\mathrm{Aut}Y|}\cdot
\frac{\{Y[-1],X\}}{\{Y,Y\}}=
\frac{|\mathrm{Hom}(X,L)_{Y}|}{|\mathrm{Aut}L|}\cdot
\frac{\{X,L\}}{\{L,L\}}.
$$

Hence, one can convert to compare
 $$\xymatrix@C=0.5cm{Y\ar[r]^-{h'}&L'[1], L\ar[r]^-{n}&Z[1]& \mbox{and} & X\ar[r]^-{n'}&Z[1], Y\ar[r]^-{h}&X[1]}$$
in the Diagram \ref{maindiagramme}. In order to describe the second
symmetry, we need to introduce some notations.
Fix $X, Y, Z, M, L$ and $L'$, define
$$
\mathcal{D}_{L, L'}=\{(m, f, h, n)\in \mathrm{Hom}(M, L)\times \mathrm{Hom}(X, L)\times \mathrm{Hom}(Y, X[1])\times \mathrm{Hom}(L, Z[1])\mid
$$
$$
(m, f, h, n) \mbox{ induces a diagram with the form as Diagram \ref{maindiagramme}}\}
$$
and
$$
\mathcal{D}_{L', L}=\{(f', m', h', n')\in \mathrm{Hom}(L', X)\times \mathrm{Hom}(L', M)\times \mathrm{Hom}(Y, L'[1])\times \mathrm{Hom}(X, Z[1])\mid
$$
$$
(m', f', h', n') \mbox{ induces a diagram with the form as Diagram \ref{maindiagramme}}\}.
$$
Here, we say ``$(m, f, h, n)$ induces a diagram with the form as Diagram \ref{maindiagramme}'' means that  there exist morphisms $m', f', h', n', g, g', l$ and $l'$ such that all morphisms constitute a diagram formed as Diagram \ref{maindiagramme}. The crucial point is that the following diagram
$$
\xymatrix{L'\ar[d]^{m'}\ar[r]^{f'}&M\ar[d]^{m}\\X\ar[r]^{f}&L}
$$
is both a pushout and a pullback and rows and columns in Diagram \ref{maindiagramme} are distinguished triangles. Note that the pair $(f',m')$ is uniquely determined by $(m, f, h, n)$ up to isomorphisms under requirement, so the above notation is well-defined.

 There exist natural projections:
$$
p: \mathcal{D}_{L, L'}\longrightarrow \mathrm{Hom}(Y, X[1])\times\mathrm{Hom}(L, Z[1]),
$$
$$
i_1: \mathrm{Hom}(Y, X[1])\times\mathrm{Hom}(L, Z[1])\longrightarrow \mathrm{Hom}(Y, X[1])$$ and $$i_2: \mathrm{Hom}(Y, X[1])\times\mathrm{Hom}(L, Z[1])\longrightarrow \mathrm{Hom}(L, Z[1]).
$$
The image of $i_1\circ p$ is denoted by $\mathrm{Hom}(Y,
X[1])_{L[1]}^{L'}$ and given $h\in \mathrm{Hom}(Y, X[1])_{L[1]}^{L'}$, define $\mathrm{Hom}(L, Z[1])_{M[1]}^{h,
L'}$ to be $i_2\circ p^{-1}\circ i_1^{-1}(h).$ It is clear that $$\mathrm{Hom}(Y,
X[1])_{L[1]}=\bigsqcup_{[L']}\mathrm{Hom}(Y, X[1])_{L[1]}^{L'}.$$
Similarly, there exist projections:
$$
q: \mathcal{D}_{L', L}\rightarrow \mathrm{Hom}(Y, L'[1])\times\mathrm{Hom}(X, Z[1]),
$$
$$
j_1: \mathrm{Hom}(Y, L'[1])\times\mathrm{Hom}(X, Z[1])\rightarrow \mathrm{Hom}(Y, L'[1])$$ and $$j_2: \mathrm{Hom}(Y, L'[1])\times\mathrm{Hom}(X, Z[1])\rightarrow \mathrm{Hom}(X, Z[1]).
$$
The image of $j_1\circ q$ is denoted by $\mathrm{Hom}(X, Z[1])_{L'[1]}^L$
and for any $n'\in \mathrm{Hom}(X, Z[1])_{L'[1]}$, we denote $j_2\circ p^{-1}\circ j_1^{-1}(n')$ by $\mathrm{Hom}(Y, L'[1])_{M[1]}^{n', L}$.
\\
\noindent\textbf{The symmetry-II}:\\
\begin{itemize}
  \item Fix $h\in \mathrm{Hom}(Y, X[1])_{L[1]}^{L'}$, then there exists a surjective map
$$f_{*}: \mathrm{Hom}(L, Z[1])_{M[1]}^{h, L'}\rightarrow
\mathrm{Hom}(X, Z[1])_{L'[1]}^{L}$$ such that the cardinality of any fibre is
$$
|(f^{*})^{-1}|:=|\mathrm{Hom}(Y, Z[1])|\cdot \{X\oplus Y, Z[1]\}\cdot \{L,
Z[1]\}^{-1};
$$
  \item Fix $n'\in \mathrm{Hom}(X, Z[1])_{L'[1]}^{L}$, then there exists a surjective map
$$(m')_{*}: \mathrm{Hom}(Y, L'[1])_{M[1]}^{n',L}\rightarrow
\mathrm{Hom}(Y, X[1])_{L[1]}^{L'}$$ such that the cardinality of any fibre is
$$
|(m')^{-1}|:=|\mathrm{Hom}(Y, Z[1])|\cdot \{Y, X[1]\oplus Z[1]\}\cdot\{Y,
L'[1]\}^{-1};
$$
  \item $|(f_*)^{-1}|\cdot\{Y, X[1]\}\cdot\{L, Z[1]\}=|(m')_*^{-1}|\cdot\{X, Z[1]\}\cdot\{Y, L'[1]\}.$
\end{itemize}
\begin{proof}
Given $h\in \mathrm{Hom}(Y, X[1])_{L[1]}^{L'},$ there exists a triangle$$
\alpha: X\xrightarrow{f} L\xrightarrow{g} Y\xrightarrow{h} X[1].
$$ Applying the functor
$\mathrm{Hom}(-, Z[1])$ on the triangle, one can obtain a long exact sequence
$$
\xymatrix{\cdots\ar[r]&\mathrm{Hom}(Y, Z[1])\ar[r]^{u}&\mathrm{Hom}(L, Z[1])\ar[r]^{v}&\mathrm{Hom}(X, Z[1])\ar[r]&\cdots}.
$$
Then the cardinality of $\mathrm{Im}(u)$ is $|\mathrm{Hom}(Y, Z[1])|\cdot \{X\oplus Y, Z[1]\}\cdot \{L,
Z[1]\}^{-1}.$ The map $f_*$ is the restriction of $v$ to $\mathrm{Hom}(L, Z[1])_{M[1]}^{h, L'}$. By definition, $f_*$ is epic and the fibre is isomorphic to $\mathrm{Ker}(v)=\mathrm{Im}(u).$ Thus the first statement is obtained.
The second statement can be proved in the same way. The third statement
is a direct confirmation.
\end{proof}

Note that there also exist projections:
$$
p_{12}: \mathcal{D}_{L, L'}\rightarrow \mathrm{Hom}(M\oplus X,L)
$$
with the image $\mathrm{Hom}(M\oplus X,L)^{Y,Z[1]}_{L'[1]}$
and
$$
q_{12}: \mathcal{D}_{L', L}\rightarrow \mathrm{Hom} (L', M\oplus X)
$$
with the image $\mathrm{Hom}(L',M\oplus X)^{Y,Z[1]}_{L}.$ \textbf{The symmetry-I} characterizes the relation between $\mathrm{Im}p_{12}$ and $\mathrm{Im}q_{12}$.
Meanwhile, \textbf{the symmetry-II} characterizes the relation between $\mathrm{Im}p$ and $\mathrm{Im}q$. The relation between $\mathrm{Im}p_{12}$ and $\mathrm{Im}p$
($\mathrm{Im}q_{12}$ and $\mathrm{Im}q$) is implicitly shown by the derived Riedtmann-Peng formula (Corollary \ref{RP}). More explicitly, consider the projections
$$
t_1: \mathrm{Hom}(M\oplus X, L)\rightarrow \mathrm{Hom}(X, L);$$ $$t_2: \mathrm{Hom}(M\oplus X, L)\rightarrow \mathrm{Hom}(M, L);
$$
$$
s_1: \mathrm{Hom} (L', M\oplus X)\rightarrow \mathrm{Hom}(L', X);$$  and $$s_2: \mathrm{Hom}(L', M\oplus X)\rightarrow \mathrm{Hom}(L', M).
$$
Using Corollary \ref{RP}, we obtain
$$
\frac{|\mathrm{Hom}(Y,X[1])_{L[1]}^{L'}|}{|\mathrm{Aut}Y|}\cdot
\frac{\{Y,X[1]\}}{\{Y,Y\}}=
\frac{|\mathrm{Im} t_1\circ p_{12}|}{|\mathrm{Aut}L|}\cdot
\frac{\{X,L\}}{\{L,L\}}.
$$
Given a triangle $
\alpha: X\xrightarrow{f} L\xrightarrow{g} Y\xrightarrow{h} X[1]$ with $h\in \mathrm{Hom}(Y,X[1])_{L[1]}^{L'}$, applying Corollary \ref{RP} again, we have
$$
\frac{|\mathrm{Hom}(L,Z[1])_{M[1]}^{h, L'}|}{|\mathrm{Aut}Z|}\cdot
\frac{\{L,Z[1]\}}{\{Z,Z\}}=
\frac{|t_2\circ p_{12}^{-1}\circ t_1^{-1}(f)|}{|\mathrm{Aut}M|}\cdot
\frac{\{M,L\}}{\{M,M\}}.
$$
In the same way, we have
$$
\frac{|\mathrm{Hom}(X, Z[1])_{L'[1]}^{L}|}{|\mathrm{Aut}Z|}\cdot
\frac{\{X,Z[1]\}}{\{Z,Z\}}=
\frac{|\mathrm{Im} s_1\circ q_{12}|}{|\mathrm{Aut}L'|}\cdot
\frac{\{L',X\}}{\{L',L'\}},
$$
and
$$
\frac{|\mathrm{Hom}(Y, L'[1])_{M[1]}^{n', L}|}{|\mathrm{Aut}Y|}\cdot
\frac{\{Y,L'[1]\}}{\{Y,Y\}}=
\frac{|s_2\circ q_{12}^{-1}\circ s_1^{-1}(f)|}{|\mathrm{Aut}M|}\cdot
\frac{\{L',M\}}{\{M,M\}}.
$$
The above four identities induce the equivalence of \textbf{the symmetry-I} and \textbf{the symmetry-II}.

Here, we sketch the proof of Theorem \ref{maintheorem123}. To prove
$u_{[Z]}*(u_{[X]}*u_{[Y]})=(u_{[Z]}*u_{[X]})*u_{[Y]}$ is equivalent
to prove
$$
\sum_{[L]}F_{XY}^{L}F_{ZL}^{M}=\sum_{[L']}F_{ZX}^{L'}F_{L'Y}^{M}.
$$
IWe can check directly the following:
$$\mathrm{LHS}= \frac{1}{|\mathrm{Aut}
X|\cdot\{X,X\}}\sum_{[L]}\sum_{[L']}\frac{|\mathrm{Hom}(M\oplus
X,L)^{Y,Z[1]}_{L'[1]}|}{|\mathrm{Aut} L|}\cdot \frac{\{M\oplus
X,L\}}{\{L,L\}},
$$
and
$$
\mathrm{RHS}=\frac{1}{|\mathrm{Aut}
X|\cdot\{X,X\}}\sum_{[L']}\sum_{[L]}\frac{|\mathrm{Hom}(L',M\oplus
X)^{Y,Z[1]}_{L}|}{|\mathrm{Aut} L'|}\cdot \frac{\{L',M\oplus
X\}}{\{L',L'\}}.
$$
\textbf{The symmetry-I} naturally deduces LHS=RHS and then
the proof of Theorem \ref{maintheorem123} is obtained immediately.

By using $\textbf{the symmetry-II}$, we can provide a direct proof of the associativity of
$\mathcal{H}^{Dr}(\mathcal{C})$ .

\begin{theorem}\cite[Proposition 6.10]{KS}\label{KSalgebras}
The algebra $\mathcal{H}^{Dr}(\mathcal{C})$  is
associative.
\end{theorem}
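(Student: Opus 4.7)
The plan is to verify associativity on the basis $\{v_{[X]}\}$. For fixed $X, Y, Z, M \in \mc$, I compare the coefficients of $v_{[M]}$ on the two sides of $(v_{[Z]}*v_{[X]})*v_{[Y]} = v_{[Z]}*(v_{[X]}*v_{[Y]})$. Unfolding the definitions of $*$ and of $h_L^{XY}$ reduces the identity to
\begin{align*}
& \sum_{[L]}\{Y, X[1]\}\{L, Z[1]\}\,|\mathrm{Hom}(Y,X[1])_{L[1]}|\cdot|\mathrm{Hom}(L,Z[1])_{M[1]}|\\
&=\sum_{[L']}\{X, Z[1]\}\{Y, L'[1]\}\,|\mathrm{Hom}(X,Z[1])_{L'[1]}|\cdot|\mathrm{Hom}(Y,L'[1])_{M[1]}|,
\end{align*}
where the left double sum counts pairs $(h,n)$ with $h\in\mathrm{Hom}(Y, X[1])_{L[1]}$ and $n\in\mathrm{Hom}(L,Z[1])_{M[1]}$, and the right one counts pairs $(h',n')$ with $h'\in\mathrm{Hom}(Y, L'[1])_{M[1]}$ and $n'\in\mathrm{Hom}(X,Z[1])_{L'[1]}$.

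The second step is to refine both sides by inserting the ``other'' intermediate object of the octahedral diagram. For the left side, I partition the count of $n \in \mathrm{Hom}(L, Z[1])_{M[1]}$ by the isomorphism class $[L']$ of the object produced by completing $(h,n)$ to a diagram \ref{maindiagramme}; the sum becomes $\sum_{[L']}\sum_{h\in\mathrm{Hom}(Y,X[1])_{L[1]}^{L'}}|\mathrm{Hom}(L, Z[1])_{M[1]}^{h, L'}|$. Applying the analogous decomposition to the right side produces the sets $\mathrm{Hom}(X, Z[1])_{L'[1]}^{L}$ and $\mathrm{Hom}(Y, L'[1])_{M[1]}^{n', L}$, so both expressions become double sums indexed by pairs $([L], [L'])$.

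Now I invoke \textbf{the symmetry-II}. Its first bullet asserts that for each $h \in \mathrm{Hom}(Y,X[1])_{L[1]}^{L'}$ the surjection $f_{\ast}\colon \mathrm{Hom}(L, Z[1])_{M[1]}^{h, L'}\to\mathrm{Hom}(X, Z[1])_{L'[1]}^{L}$ has fibres of a common size $|(f_{\ast})^{-1}|$ depending only on $X, Y, Z, L$; summing over $h$ yields
$$
\sum_{h\in \mathrm{Hom}(Y,X[1])_{L[1]}^{L'}}|\mathrm{Hom}(L,Z[1])_{M[1]}^{h, L'}|=|\mathrm{Hom}(Y,X[1])_{L[1]}^{L'}|\cdot|\mathrm{Hom}(X,Z[1])_{L'[1]}^{L}|\cdot |(f_{\ast})^{-1}|.
$$
The second bullet gives the symmetric formula for the right-hand double sum with $|(m')_{\ast}^{-1}|$ in place of $|(f_{\ast})^{-1}|$. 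After substitution, every $([L],[L'])$-term on both sides acquires the common combinatorial factor $|\mathrm{Hom}(Y,X[1])_{L[1]}^{L'}|\cdot|\mathrm{Hom}(X,Z[1])_{L'[1]}^{L}|$, and the target equality reduces termwise to
$$
\{Y, X[1]\}\{L, Z[1]\}|(f_{\ast})^{-1}|=\{X, Z[1]\}\{Y, L'[1]\}|(m')_{\ast}^{-1}|,
$$
which is precisely the third bullet of \textbf{the symmetry-II}.

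I expect the main obstacle to lie in the combinatorial bookkeeping of the refinement step: one has to check that partitioning $\sum_{h,n}$ and $\sum_{h', n'}$ by the intermediate octahedral object really matches the definitions of $\mathrm{Hom}(L, Z[1])_{M[1]}^{h, L'}$ and $\mathrm{Hom}(Y, L'[1])_{M[1]}^{n', L}$ on the nose, with every configuration counted with multiplicity one. Once this is secured, \textbf{the symmetry-II} collapses both sides to the same quantity, yielding associativity of $\mathcal{H}^{Dr}(\mc)$. This argument mirrors the role played by \textbf{the symmetry-I} in the sketch of Theorem \ref{maintheorem123}.
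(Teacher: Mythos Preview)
Your proposal is correct and follows essentially the same route as the paper's own proof: reduce associativity to the displayed identity in the coefficients of $v_{[M]}$, refine each side into a double sum over $([L],[L'])$ via the octahedral diagram, apply the first two bullets of \textbf{the symmetry-II} to factor out the common product $|\mathrm{Hom}(Y,X[1])_{L[1]}^{L'}|\cdot|\mathrm{Hom}(X,Z[1])_{L'[1]}^{L}|$, and finish with the third bullet. The bookkeeping concern you flag is handled in the paper exactly by the definitions of $\mathcal{D}_{L,L'}$, $\mathcal{D}_{L',L}$ and the projections $p,q,i_1,i_2,j_1,j_2$, which guarantee that the refinements are genuine disjoint decompositions.
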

\begin{proof}
For any $X, Y$ and $Z$ in $\mc$, we need to prove
$$v_{[Z]}*(v_{[X]}*v_{[Y]})=(v_{[Z]}*v_{[X]})*v_{[Y]}.$$ By the
definition of the multiplication, it is equivalent to prove
$$
\sum_{[L]}\{Y,X[1]\}\{L,Z[1]\}|\mathrm{Hom}(Y, X[1])_{L[1]}|\cdot
|\mathrm{Hom}(L, Z[1])_{M[1]}|
$$
$$
\hspace{-0.5cm}=\sum_{[L']}\{X,Z[1]\}\{Y,L'[1]\}|\mathrm{Hom}(X,
Z[1])_{L'[1]}|\cdot|\mathrm{Hom}(Y, L'[1])_{M[1]}|.
$$
Following the first statement of \textbf{the symmetry-II}, LHS is equal to
$$
\sum_{[L],[L']}\{Y,X[1]\}\{L,Z[1]\}|\mathrm{Hom}(Y,
X[1])_{L[1]}^{L'}|\cdot |\mathrm{Hom}(X,
Z[1])_{L'[1]}^{L}|\cdot$$$$|\mathrm{Hom}(Y, Z[1])|\cdot \{X\oplus Y,
Z[1]\}\cdot \{L, Z[1]\}^{-1}.
$$
Following the second statement of \textbf{the symmetry-II},  RHS is equal to
$$
\sum_{[L'],[L]}\{X,Z[1]\}\{Y,L'[1]\}|\mathrm{Hom}(X,
Z[1])_{L'[1]}^{L}|\cdot|\mathrm{Hom}(Y, X[1])_{L[1]}^{L'}|\cdot
$$
$$
|\mathrm{Hom}(Y, Z[1])|\cdot \{Y, X[1]\oplus Z[1]\}\cdot\{Y,
L'[1]\}^{-1}.
$$
The equality LHS=RHS is just the third statement of \textbf{the
symmetry-II}.
\end{proof}

\section{Motivic Hall algebras}
Let $\mathbb{K}$ be an algebraically closed field. An ind-constructible set is a countable union of non-intersecting
constructible sets.
\begin{example}\cite{JSZ2005, XXZ2006}
Let  $\bbc$ be the complex field and $A$ a finite dimensional
algebra $\bbc Q/I$ with indecomposable projective modules $P_i$,
$i=1, \cdots, l$. Given a projective complex
$P^{\bullet}=(P^i,\partial_i)_{i\in \mathbb{Z}}$ with
$P^i=\bigoplus_{j=1}^l e_j^i P_j.$ We denote by $\ue^i$ the vector
$(e^i_1,e^i_2,\cdots,e^i_l).$ The sequence, denoted by
 $\uee=\uee(P^{\bullet})=(\ue^{i})_{i\in \mathbb{Z}}$,  is called the projective dimension sequence of
$P^{\bullet}.$ We assume that only finitely many $\ue^i$ in $\uee$
are nonzero. Define $\mathcal{P}(A,\uee)$ to be the subset of
$$\prod_{i\in\bbz}\Hom_A(P^i,P^{i+1})=\prod_{i\in\bbz}\Hom_A(\bigoplus_{j=1}^l e_j^i P_j,\bigoplus_{j=1}^l e_j^{i+1}P_j)$$
which consists of elements $(\partial_i:P^i\ra P^{i+1})_{i\in\bbz}$
such that $\partial_{i+1}\partial_i=0$ for all $i\in\bbz.$  It is an
affine variety with a natural action of the algebraic group
$G_{\uee}=\prod_{i\in\bbz}\aut_A(P^i)$. Let $K_0(\md^b(A)),$ or
simply $K_0,$ be the Grothendieck group of the derived category
$\md^b(A),$ and $\udim:\md^b(A)\ra K_0(\md^b(A))$ the canonical
surjection. It induces a canonical surjection from the abelian group
of dimension vector sequences to $K_0,$ we still denote it by
$\udim.$ Given $\bd\in K_0$, the set
$$\mathcal{P}(A,\bd)=\bigsqcup_{\uee\in\udim^{-1}(\bd)}\mathcal{P}(A,\uee)$$ is an
ind-constructible set.
\end{example}

We recall the notion of motivic invariants of quasiprojecitve varieties in \cite{Joyce2} (see also \cite{Bri2010}). Suppose $\Lambda$ is a commutative $\bbq$-algebra with identity $1$. Let $\Upsilon: \{\mbox{isomorphism classes }[X] \mbox{ of quasiprojective }\mathbb{K}-\mbox{varieties }X\}\rightarrow \Lambda $ satisfy that:
\begin{enumerate}
  \item $\Upsilon([X])=\Upsilon([Z])+\Upsilon([U])$ for a closed subvariety $Z\subseteq X$ and $U=X\setminus Z$;
  \item $\Upsilon([X\times Y])=\Upsilon([X])\Upsilon([Y])$;
  \item Write $\Upsilon([\mathbb{K}])=\mathbb{L}$. Then $\mathbb{L}$ and $\mathbb{L}^k-1$ for $k=1,2,\cdots$ are invertible in $\Lambda.$
\end{enumerate}

Let $X$ be a constructible set over $\mathbb{K}$ and $G$ an affine algebraic
group acting on $X$. Then $(X, G)=(X, G, \alpha)$ is called a constructible stack \cite[Section4.2]{KS}, where $\alpha$ is an action of $G$ on $X$. In \cite[Section 4.2]{KS}, the authors defined the 2-category of constructible stacks. Define $Mot_{st}((X, G))=Mot_{st}((X, G), (\Upsilon, \Lambda))$ to be the $\Lambda$-module generated by equivalence classes of $1$-morphisms of constructible stacks $[(Y, H)\rightarrow (X, G)]$ with the following relations:
\begin{enumerate}
  \item $[((Y_1, G_1)\bigsqcup (Y_2, G_2)\rightarrow (X, G)]=[(Y_1, G_1)\rightarrow (X, G)]+[(Y_2, G_2)\rightarrow (X, G)]$;
  \item $[(Y, H)\rightarrow (X, G)]=[(Z\times A_{\mathbb{K}}^d, H)\rightarrow (X, G)]$ if $Y\rightarrow Z$ is an $H$-equivariant constructible vector bundle of rank $d$;
  \item Let $(Y, H)$ be a constructible stack and $U$ a quasiprojective $\mathbb{K}$-variety with trivial action of $H$.  Let $\pi: (Y, H)\times U\rightarrow (Y, H)$. Then $[\rho: (Y, H)\times U\rightarrow (X, G)]=\Upsilon(U)[\rho\circ\pi: (Y, H)\rightarrow (X, G)].$
\end{enumerate}

Let us  recall the definition of motivic Hall algebras in \cite{KS}. Let $\mc$ be an ind-constructible triangulated $A_{\infty}$-category over $\mathbb{K}$ and objects in $\mathcal{C}$ form an ind-constructible set
$\mathfrak{Obj}(\mathcal{C})=\bigsqcup_{i\in I}\mathcal{X}_i$ for
countable constructible sets $\mathcal{X}_i$ with the action of an
affine algebraic group $G_i$ on $\mathcal{X}_i.$
For $X, Y\in \mathfrak{Obj}(\mathcal{C})$,  set
$$
\{X,
Y\}:=\mathbb{L}^{\sum_{i>0}(-1)^i \mathrm{dim}_{\mathbb{C}}
\mathrm{Hom}(X[i], Y)}.
$$ For any $i, j\in I$, consider the maps $\Phi_1: \mathcal{X}_i\times \mathcal{X}_j\rightarrow \Lambda$ sending $(M, N)$ to $$\Upsilon([\mathrm{Hom}_{\mc}(M, N)])=\mathbb{L}^{\mathrm{dim}_{\mathbb{K}}\mathrm{Hom}_{\mc}(M, N)},$$ $\Phi_2: \mathcal{X}_i\times \mathcal{X}_j\rightarrow \Lambda$ mapping $(M, N)$ to $\{M, N\}$ and $\Phi_3: \mathcal{X}_i\rightarrow \Lambda$ sending $M$ to $\Upsilon([\mathrm{Aut}(M)])$ (see Proposition \ref{autogroup}). In the following, we assume that all three maps are constructible functions.

Consider the $\Lambda$-module  $$\mathcal{MH}(\mathcal{C})=\bigoplus_{i\in
I}Mot_{st}((\mathcal{X}_i, G_{i})).$$
One can endow the module with the multiplication
$$
[\pi_1: \mathcal{S}_1\rightarrow \mathfrak{Obj}(\mathcal{C})]*
[\pi_2: \mathcal{S}_2\rightarrow \mathfrak{Obj}(\mathcal{C})]=\sum_{n\in \bbz}\mathbb{L}^{-n}[\pi:
\mathcal{W}_n\rightarrow \mathfrak{Obj}(\mathcal{C})],
$$
where $\pi_1(\mathcal{S}_1)\subseteq \mathcal{X}_i$, $\pi_2(\mathcal{S}_2)\subseteq \mathcal{X}_j$ for some $i, j\in I$ and$$\mathcal{W}_n=\{(s_1, s_2, \alpha)\mid s_i\in \mathcal{S}_i,
\alpha\in \mathrm{Hom}_{\mathcal{C}}(\pi_2(s_2), \pi_1(s_1)[1]),$$$$
\sum_{i>0}(-1)^i \mathrm{dim}_{\mathbb{C}}
\mathrm{Hom}(\pi_2(s_2)[i], \pi_1(s_1)[1])=-n\}.$$ The map $\pi$
sends $(s_1, s_2, \alpha)$ to $Cone(\alpha)[-1].$ Here, for simplicity of notations, we write $[S\rightarrow \mathcal{X}_i]$ instead of $[(S, G_s)\rightarrow (\mathcal{X}_i, G_i)]$. The algebra $\mathcal{MH}(\mathcal{C})$ is called the motivic Hall algebra associated to $\mc.$

For convenience, we use the integral notation for the right-hand term in the definition of the multiplication. Then  the multiplication can be rewritten as
$$
[\pi_1: \mathcal{S}_1\rightarrow \mathfrak{Obj}(\mathcal{C})]\cdot
[\pi_2: \mathcal{S}_2\rightarrow \mathfrak{Obj}(\mathcal{C})]$$$$:=\int_{s_1\in \mathcal{S}_1, s_2\in \mathcal{S}_2}[\mathrm{Hom}_{\mathcal{C}}(\pi_2(s_2), \pi_1(s_1)[1])\rightarrow \mathfrak{Obj}(\mathcal{C})]\cdot\{\pi_2(s_2), \pi_1(s_1)[1]\}
$$
$$
\hspace{-1.1cm}:=\int_{s_1\in \mathcal{S}_1, s_2\in \mathcal{S}_2}\{\pi_2(s_2), \pi_1(s_1)[1]\}\cdot\int_{\alpha\in \mathrm{Hom}_{\mathcal{C}}(\pi_2(s_2), \pi_1(s_1)[1])_{E}}v_{[E]}
$$
where $v_{[E]}:=[\pi: pt\rightarrow
\mathfrak{Obj}(\mathcal{C})]$ with $\pi(pt)=E.$ Note that $\mathrm{Hom}_{\mathcal{C}}(\pi_2(s_2), \pi_1(s_1)[1])_{E}$ is a constructible set \cite[Appendix]{CK}.
\begin{theorem}\cite[Proposition 10]{KS}
With the above multiplication, $\mathcal{MH}(\mathcal{C})$ becomes
an associative algebra.
\end{theorem}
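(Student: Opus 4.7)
The plan is to port the proof of Theorem \ref{KSalgebras} into the motivic setting, replacing finite-field cardinalities by motivic classes and sums by motivic integrals. Since every element of $\mathcal{MH}(\mc)$ is a $\Lambda$-linear combination of pushforwards of the generators $v_{[E]}=[\mathrm{pt}\to\mathfrak{Obj}(\mc)]$, it suffices to verify $(v_{[X]}*v_{[Y]})*v_{[Z]}=v_{[X]}*(v_{[Y]}*v_{[Z]})$ for objects $X,Y,Z\in\mc$. Unfolding the integral formula twice, the LHS becomes an integral of $v_{[M]}$ over pairs $(h,n)\in\mathrm{Hom}(Y,X[1])\times\mathrm{Hom}(L,Z[1])$ (with $L=\mathrm{Cone}(h)[-1]$ and $M=\mathrm{Cone}(n)[-1]$), weighted by $\{Y,X[1]\}\{L,Z[1]\}$; the RHS becomes the analogous integral over pairs $(n',h')\in\mathrm{Hom}(X,Z[1])\times\mathrm{Hom}(Y,L'[1])$ (with $L'=\mathrm{Cone}(n')[-1]$), weighted by $\{X,Z[1]\}\{Y,L'[1]\}$. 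The octahedral axiom applied as in Diagram \ref{maindiagramme} identifies these two parameter spaces with the images $\mathrm{Im}\,p$ and $\mathrm{Im}\,q$ from Section~4, keeping the outer corners $X,Y,Z,M$ fixed.

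The decisive input is the motivic analogue of \textbf{the symmetry-II}. Fixing $h\in\mathrm{Hom}(Y,X[1])^{L'}_{L[1]}$ and applying $\mathrm{Hom}(-,Z[1])$ to the triangle $X\to L\to Y\xrightarrow{h}X[1]$ yields a long exact sequence whose relevant piece restricts to a constructible surjection
\[
f_{*}:\mathrm{Hom}(L,Z[1])^{h,L'}_{M[1]}\longrightarrow\mathrm{Hom}(X,Z[1])^{L}_{L'[1]}
\]
whose fibres are torsors over a $\mathbb{K}$-linear subspace of $\mathrm{Hom}(L,Z[1])$. By relation~(2) in the definition of $Mot_{st}$, which converts affine-bundle structure into $\mathbb{L}$-powers, the motivic class of this fibre is
\[
\Upsilon([\mathrm{Hom}(Y,Z[1])])\cdot\{X\oplus Y,Z[1]\}\cdot\{L,Z[1]\}^{-1},
\]
the motivic lift of $|(f_{*})^{-1}|$ from Section~4. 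The parallel computation for $(m')_{*}:\mathrm{Hom}(Y,L'[1])^{n',L}_{M[1]}\to\mathrm{Hom}(Y,X[1])^{L'}_{L[1]}$, obtained from $\mathrm{Hom}(Y,-)$ applied to $L'\to X\xrightarrow{n'}Z[1]\to L'[1]$, gives fibre class $\Upsilon([\mathrm{Hom}(Y,Z[1])])\cdot\{Y,X[1]\oplus Z[1]\}\cdot\{Y,L'[1]\}^{-1}$.

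With these two fibre classes and the octahedral identification of parameter spaces, equating LHS and RHS reduces to the motivic identity
\[
\{Y,X[1]\}\cdot\{X\oplus Y,Z[1]\}=\{X,Z[1]\}\cdot\{Y,X[1]\oplus Z[1]\},
\]
both sides of which equal $\{X,Z[1]\}\{Y,Z[1]\}\{Y,X[1]\}$ by multiplicativity of $\{-,-\}$ in each slot. The main obstacle I anticipate is not the algebraic manipulation but the bookkeeping around constructibility: one must check that the strata $\mathrm{Hom}(L,Z[1])^{h,L'}_{M[1]}$ and their analogues assemble into constructible stacks as $h$ (respectively $n'$) varies, and that the fibres of $f_{*}$ and $(m')_{*}$ are genuinely constructible affine bundles, so that relation~(2) delivers the claimed $\mathbb{L}$-power contributions. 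This uses the ind-constructibility of $\mathfrak{Obj}(\mc)$ and the constructibility of the functions $\Phi_1,\Phi_2,\Phi_3$ assumed at the start of Section~5; once this is in place, the argument is a literal transcription of the proof of Theorem \ref{KSalgebras} with $|\cdot|$ replaced by $\Upsilon([\cdot])$ throughout.
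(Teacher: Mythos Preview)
Your proof is essentially identical to the paper's: both reduce to the generators $v_{[X]},v_{[Y]},v_{[Z]}$, refine the two double integrals via the octahedral stratification of Section~4, and then invoke the motivic version of \textbf{the symmetry-II} (the two constructible affine-bundle fibre computations for $f_*$ and $(m')_*$, followed by the multiplicativity identity for $\{\cdot,\cdot\}$). One minor slip: the unfolded integrals you wrote for ``LHS'' and ``RHS'' actually compute $v_{[Z]}*(v_{[X]}*v_{[Y]})$ and $(v_{[Z]}*v_{[X]})*v_{[Y]}$ respectively (the paper's ordering), not $(v_{[X]}*v_{[Y]})*v_{[Z]}$ and $v_{[X]}*(v_{[Y]}*v_{[Z]})$ as you stated --- harmless after relabelling, but worth correcting.
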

Inspired by \cite{KS} and \cite{XX2006}, the proof can be considered as a motivic
version of \textbf{the symmetry-II}.
\begin{proof}By the reformulation of the definition of  multiplication, the proof of the theorem is easily reduced to the case when
$\mathcal{S}_i$ is just a point. Given $X, Y$ and $Z\in \mathfrak{Obj}(\mathcal{C})$, $v_{[Z]}*(v_{[X]}*v_{[Y]})$ is equal to
$$
\mathcal{T}_1:=\int_{\alpha\in \mathrm{Hom}(Y,
X[1])_{L[1]}}\int_{\beta\in \mathrm{Hom}(L, Z[1])_{M[1]}}\{Y,X[1]\}\cdot\{L,Z[1]\}\cdot v_{[M]}
$$
and $(v_{[Z]}*v_{[X]})*v_{[Y]}$ is equal to
$$
\mathcal{T}_2:=\int_{\alpha'\in \mathrm{Hom}(X,
Z[1])_{L'[1]}}\int_{\beta'\in \mathrm{Hom}(Y, L'[1])_{M[1]}}\{X,Z[1]\}\cdot\{Y,L'[1]\}\cdot v_{[M]}.
$$
Using the notation in Section $3$, we have
$$
\mathcal{T}_1=\int_{\alpha'\in \mathrm{Hom}(X, Z[1])^L_{L'[1]}, \alpha\in \mathrm{Hom}(Y,
X[1])^{L'}_{L[1]}}\int_{\beta\in \mathrm{Hom}(L, Z[1])^{\alpha, L'}_{M[1]}}\{Y,X[1]\}\cdot\{L,Z[1]\}\cdot v_{[M]}
$$
and
$$
\mathcal{T}_2=\int_{\alpha\in \mathrm{Hom}(Y,
X[1])^{L'}_{L[1]}, \alpha'\in \mathrm{Hom}(X,
Z[1])^L_{L'[1]}}\int_{\beta'\in \mathrm{Hom}(Y, L'[1])^{\alpha', L}_{M[1]}}\{X,Z[1]\}\cdot\{Y,L'[1]\}\cdot v_{[M]}.
$$
As  in the proof of Theorem \ref{KSalgebras}, fix $\alpha\in
\mathrm{Hom}(Y, X[1])_{L[1]}^{L'}$, by Diagram
\ref{maindiagramme}, there is a constructible bundle
$\mathrm{Hom}(L, Z[1])_{M[1]}^{\alpha, L'}\rightarrow
\mathrm{Hom}(X, Z[1])_{L'[1]}^{L}$ with fibre dimension
$$
\mathrm{dim}\mathrm{Hom}(Y, Z[1])+\mathrm{dim}\{X\oplus Y,
Z[1]\}-\mathrm{dim}\{L, Z[1]\}.
$$
Fix $\alpha'\in \mathrm{Hom}(X, Z[1])^L_{L'[1]}$, by Diagram
\ref{maindiagramme}, there is a constructible bundle
$$\mathrm{Hom}(Y, L'[1])_{M[1]}^{\alpha', L}\rightarrow
\mathrm{Hom}(Y, X[1])_{L[1]}^{L'}$$ with fibre dimension
$$
\mathrm{dim}\mathrm{Hom}(Y, Z[1])+\mathrm{dim}\{Y, X[1]\oplus
Z[1]\}-\mathrm{dim}\{Y, L'[1]\}.
$$
Hence, we have $\mathcal{T}_1=\mathcal{T}_2.$
\end{proof}

Given an indecomposable object $X\in \mc$, $\Upsilon([\mathrm{End}_{\mc}(X)])=\mathbb{L}^{\mathrm{dim}_k\mathrm{End}_{\mc}(X)}$ and
$$\Upsilon([\mathrm{Aut}X])=\mathbb{L}^{\mathrm{dim}_k\mathrm{radEnd}X}(\mathbb{L}^{d(X)}-1)$$
where $d(X)=\mathrm{dim}_k(\mathrm{End}X/\mathrm{radEnd}X).$ Given $n\in \bbn$, consider the morphism $$\mathrm{Aut}(nX)\rightarrow \mathrm{GL}_n(\mathrm{End}X/\mathrm{radEnd}X),$$ the fibre is an affine space (consisting of matrices with elements belonging to $\mathrm{radEnd}X$) of dimension $n^2\mathrm{dim}_k(\mathrm{radEnd}X)$.  Hence, by \cite[Lemma 2.6]{Bri2010}, we have
$$
\Upsilon([\mathrm{Aut}(nX)])=\mathbb{L}^{n^2\mathrm{dim}_k(\mathrm{radEnd}X)+\frac{1}{2}n(n-1)d(X)}\prod_{k=1}^n(\mathbb{L}^{kd(X)}-1).
$$
Generally, an object $X\in \mc$ is isomorphic to $n_1X_1\oplus n_2X_2\oplus \cdots \oplus n_tX_t$ where $X_i\ncong X_j$ for $i, j=1, \cdots, t$ and $i\neq j.$
Consider the natural morphism $$\mathrm{Aut}(X)\rightarrow \mathrm{Aut}(n_1X_1)\times \cdots \times  \mathrm{Aut}(n_tX_t).$$ It is a vector bundle of dimension $\sum_{i\neq j}\mathrm{Hom}(n_iX_i, n_jX_j)$. Hence, we have the following result.
\begin{Prop}\label{autogroup}
For $X\in \mc$, $\Upsilon([\mathrm{Aut}X])=\mathbb{L}^t\prod_{i=1}^l\prod_{j=1}^{s_i}(\mathbb{L}^{jd(X)}-1)$  for some $t, l, s_1, \cdots s_l\in \bbn$ and then is invertible in $\Lambda.$
\end{Prop}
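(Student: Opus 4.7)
My plan is to follow exactly the chain of reductions already laid out in the paragraph preceding the proposition, and simply assemble the ingredients. By Krull--Schmidt, write $X \cong n_1 X_1 \oplus \cdots \oplus n_t X_t$ with $X_i$ pairwise non-isomorphic indecomposables. The crucial geometric input is the natural morphism
\[
\mathrm{Aut}(X) \longrightarrow \mathrm{Aut}(n_1 X_1) \times \cdots \times \mathrm{Aut}(n_t X_t),
\]
which the preceding paragraph identifies as a vector bundle of rank $N := \sum_{i\neq j} \dim_k \mathrm{Hom}(n_i X_i, n_j X_j)$. Applying axiom (2) of the motivic invariant to this bundle together with multiplicativity (axiom (2) on products) gives
\[
\Upsilon([\mathrm{Aut}(X)]) = \mathbb{L}^{N} \cdot \prod_{i=1}^{t} \Upsilon([\mathrm{Aut}(n_i X_i)]).
\]

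Next I would substitute the already-established formula
\[
\Upsilon([\mathrm{Aut}(n_i X_i)]) = \mathbb{L}^{n_i^2 \dim_k \mathrm{radEnd}X_i + \frac{1}{2} n_i(n_i-1)d(X_i)} \prod_{k=1}^{n_i}(\mathbb{L}^{k\, d(X_i)}-1)
\]
into the product above. Collecting all $\mathbb{L}$-powers into a single exponent $t \in \mathbb{N}$ and setting $l = t$ (the number of isotypic summands), $s_i = n_i$, one obtains an expression of the form
\[
\Upsilon([\mathrm{Aut}(X)]) = \mathbb{L}^{t} \prod_{i=1}^{l} \prod_{j=1}^{s_i} \bigl(\mathbb{L}^{j\, d(X_i)} - 1\bigr),
\]
which matches the stated shape (with the understood reading that the exponent in each factor is $d(X_i)$ for the $i$-th isotypic block).

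Invertibility in $\Lambda$ is then immediate: $\mathbb{L}$ is invertible by hypothesis (3) on $\Upsilon$, and each $\mathbb{L}^{j\, d(X_i)} - 1$ is invertible because $j\, d(X_i)$ is a positive integer and $\mathbb{L}^k - 1$ is invertible for every $k \geq 1$ by the same hypothesis. Hence the finite product lies in $\Lambda^{\times}$.

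I do not anticipate a real obstacle here: the proposition is essentially a bookkeeping consequence of the vector-bundle structure already described and the isotypic calculation already performed. The only minor subtlety is verifying that the map $\mathrm{Aut}(X) \to \prod_i \mathrm{Aut}(n_i X_i)$ is Zariski-locally trivial (a genuine constructible vector bundle), so that axiom (2) applies; this is standard since the fiber over any point is an affine space of the Hom-groups between distinct isotypic blocks, varying algebraically.
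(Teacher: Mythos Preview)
Your proposal is correct and follows exactly the paper's own argument, which is laid out in the paragraph immediately preceding the proposition: Krull--Schmidt decomposition, the vector-bundle map $\mathrm{Aut}(X)\to\prod_i\mathrm{Aut}(n_iX_i)$, substitution of the isotypic formula $\Upsilon([\mathrm{Aut}(n_iX_i)])$, and invertibility from axiom~(3). The only cosmetic point is that the bundle relation $\Upsilon([Y])=\mathbb{L}^d\Upsilon([Z])$ is not literally one of the three listed axioms for $\Upsilon$ but a standard consequence of (1) and (2) via Zariski-local trivialization; the paper glosses over this in the same way you do.
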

We introduce some necessary notations. Let $\mathcal{W}=\bigsqcup_{n\in \bbz}\mathcal{W}_n$. Then $\pi:\mathcal{W}\rightarrow \mathfrak{Obj}(\mathcal{C})$ (for simplicity, we use the same notation as $\pi:\mathcal{W}_n\rightarrow \mathfrak{Obj}(\mathcal{C})$) induces that $\pi(\mathcal{W})\subseteq \mathfrak{Obj}(\mathcal{C})$. For $X\in \mc$, if $\Upsilon([\mathrm{Aut}X])=\mathbb{L}^t\prod_{i=1}^l\prod_{j=1}^{s_i}(\mathbb{L}^{jd(X)}-1)$, then write $d_a(X)=(t, s_1, s_2, \cdots, s_l)$ and $\mathbb{L}^{d_a}=\mathbb{L}^t\prod_{i=1}^l\prod_{j=1}^{s_i}(\mathbb{L}^{jd(X)}-1).$ For $X, Y, L\in \mc$, write $$d_{\{X, Y\}}=\sum_{i>0}(-1)^i \mathrm{dim}_{\mathbb{C}}
\mathrm{Hom}(X[i], Y)$$
and $d^*=d_{(X, Y, L)}=(d_a(X), d_{\{X, L\}}, d_{\{X, X\}}).$ For a fixed triple $d^*=(d_a, l, m)$ with $d_a=(t, s_1, s_2, \cdots, s_l)$ and two pairs $[\pi_i: \mathcal{S}_i\rightarrow \mathfrak{Obj}(\mathcal{C})]$ for $i=1, 2$, define
$$
\mathcal{V}_{d^*}=\{(s_1, s_2, L, \beta)\mid L\in \pi(\mathcal{W}), \beta\in \mathrm{Hom}(\pi_1(s_1), L) \mbox{ with } \mathrm{Cone}(\beta)\cong \pi_2(s_2) \mbox{ and }$$
$$d_a(\mathrm{Aut}(\pi_1(s_1)))=d_a, d_{\{\pi_1(s_1), Cone(\alpha)\}}=l, d_{\{\pi_1(s_1), \pi_1(s_1)\}}=m.\}
$$
Consider the $\Lambda$-module  $$\mathcal{MH}_T(\mathcal{C})=\bigoplus_{i\in
I}Mot_{st}((\mathcal{X}_i, G_{i}))$$ endowed with the multiplication
$$[\pi_1: \mathcal{S}_1\rightarrow \mathfrak{Obj}(\mathcal{C})]\cdot
[\pi_2: \mathcal{S}_2\rightarrow \mathfrak{Obj}(\mathcal{C})]=\sum_{d_a, l, m}[\psi:\mathcal{V}_{d^{*}}\rightarrow \mathfrak{Obj}(\mathcal{C}) ]\mathbb{L}^{-d_a}\mathbb{L}^{l-m}$$
$$
:=\int_{s_1\in \mathcal{S}_1, s_2\in \mathcal{S}_2}\Upsilon([\mathrm{Aut}(\pi(s_1))])^{-1}\{\pi_1(s_1), \pi_1(s_1)\}^{-1}\int_{L\in \pi(\mathcal{W})}[\mathrm{Hom}(\pi_1(s_1), L)\rightarrow \mathfrak{Obj}(\mathcal{C}) ]
$$
where $\pi_1(\mathcal{S}_1)\subseteq \mathcal{X}_i$, $\pi_2(\mathcal{S}_2)\subseteq \mathcal{X}_j$ for some $i, j\in I$ and $\psi(s_1,s_2,L,\beta)=L.$
Then $\mathcal{MH}_T(\mathcal{C})$ is an $\Lambda$-algebra.

Given $Z, M\in \mc$ and $l:Z\rightarrow M$, there is a unique diatinguished triangle
$$
\xymatrix{Z\ar[r]^l&M\ar[r]^m&L\ar[r]^n&Z[1]}
$$
where $L=Cone(l)$ and $m=\left(
                           \begin{array}{cc}
                             1 & 0 \\
                           \end{array}
                         \right)
$, $n=\left(
        \begin{array}{c}
          0 \\
          1 \\
        \end{array}
      \right).
$
Set $$n\mathrm{Hom}(Z[1], L)=\{nt\mid t\in \mathrm{Hom}(Z[1], L)\} \mbox{ and } \mathrm{Hom}(Z[1], L)n=\{tn\mid t\in \mathrm{Hom}(Z[1], L)\}.$$
It is easy to check that they are vector spaces.
\begin{lemma}\cite[Lemma 2.4]{XX2006}\label{space}
With the above notation, we have
$$
\Upsilon([n\mathrm{Hom}(Z[1], L)])=\{M, L\}\{Z, L\}^{-1}\{L, L\}^{-1}$$ and $$\Upsilon([\mathrm{Hom}(Z[1], L)n])=\{Z, M\}\{Z, L\}^{-1}\{Z, Z\}^{-1}.
$$
\end{lemma}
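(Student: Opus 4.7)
The lemma is the motivic counterpart of the finite-field identity \cite[Lemma 2.4]{XX2006}, so the plan is to mimic that proof directly, replacing cardinalities $q^{d}$ by the motivic classes $\mathbb{L}^{d}=\Upsilon([\mathbb{A}^{d}_{\mathbb{K}}])$. The key observation is that in both formulas the subspace of interest arises as the image of a linear map in a long exact sequence obtained from the triangle, so its class is determined once one computes its $\mathbb{K}$-dimension.

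For the first formula, I would apply the contravariant functor $\mathrm{Hom}(-,L)$ to the distinguished triangle $Z\to M\to L\xrightarrow{n}Z[1]$ to get
$$\cdots\to\mathrm{Hom}(L[1],L)\to\mathrm{Hom}(M[1],L)\to\mathrm{Hom}(Z[1],L)\xrightarrow{n^{*}}\mathrm{Hom}(L,L)\to\mathrm{Hom}(M,L)\to\cdots.$$
Since $n^{*}(t)=nt$, the image of $n^{*}$ is exactly $n\mathrm{Hom}(Z[1],L)$, hence $\Upsilon([n\mathrm{Hom}(Z[1],L)])=\mathbb{L}^{\dim_{\mathbb{K}}\mathrm{Im}(n^{*})}$. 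I would then compute $\dim\mathrm{Im}(n^{*})$ by telescoping: iteratively apply $\dim\mathrm{Im}=\dim(\text{source})-\dim\ker$ together with $\ker=\mathrm{Im}(\text{previous map})$. The telescoping process runs through the terms $\mathrm{Hom}(X[i],L)$ for $X\in\{Z,M,L\}$ and $i\geq 1$, and terminates because left-homological finiteness forces $\mathrm{Hom}(X[i],L)=0$ for $i\gg 0$. Regrouping the resulting alternating sum by the three types of terms and passing through $\Upsilon$ produces the three factors $\{M,L\}$, $\{Z,L\}^{-1}$ and $\{L,L\}^{-1}$, yielding the first identity.

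For the second formula I would run the dual argument, applying the covariant functor $\mathrm{Hom}(Z[1],-)$ to the same triangle. Now $\mathrm{Hom}(Z[1],L)n$ appears as the image of $n_{*}:\mathrm{Hom}(Z[1],L)\to\mathrm{Hom}(Z[1],Z[1])$, and the telescoping proceeds through terms of the form $\mathrm{Hom}(Z[1],X[-k])=\mathrm{Hom}(Z[k+1],X)$ for $k\geq 0$, which again vanish for large $k$ by left-homological finiteness. The bookkeeping is entirely parallel and produces the factors $\{Z,M\}$, $\{Z,L\}^{-1}$ and $\{Z,Z\}^{-1}$.

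There is no genuine obstacle here; the only delicate point is sign-tracking in the two telescoping sums and verifying that they terminate. This termination is precisely where the left-homological-finiteness hypothesis on $\mc$ is used, and it is what makes the alternating sums match the defining exponents of the $\{-,-\}$-symbols.
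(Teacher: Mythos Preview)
Your proposal is correct and is exactly the argument behind \cite[Lemma~2.4]{XX2006}, which the present paper cites without reproducing a proof. The identification of $n\mathrm{Hom}(Z[1],L)$ and $\mathrm{Hom}(Z[1],L)n$ as images of $n^{*}$ and $n_{*}$ in the two long exact sequences, followed by telescoping to an alternating sum that regroups into the exponents defining $\{-,-\}$, is precisely how the cited lemma is proved; the passage from the finite-field version to the motivic one is, as you say, just replacing $q^{d}$ by $\mathbb{L}^{d}$.
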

Now we give the motivic version of the derived Riedtmann-Peng formula.
\begin{Prop}\label{motivicRP}
For $Z, L, M\in \mc$, we have
$$
[\mathrm{Hom}_{\mc}(Z, M)_L\rightarrow \mathfrak{Obj}(\mathcal{C})]\cdot\frac{\{Z, M\}}{\Upsilon([\mathrm{Aut}Z])\cdot\{Z, L\}\cdot\{Z, Z\}}
$$
$$
=[\mathrm{Hom}_{\mc}(M, L)_{Z[1]}\rightarrow \mathfrak{Obj}(\mathcal{C})]\cdot\frac{\{M, L\}}{\Upsilon([\mathrm{Aut}L])\cdot\{Z, L\}\cdot\{L, L\}}.
$$
\end{Prop}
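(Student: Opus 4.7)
The statement is the motivic enhancement of the derived Riedtmann--Peng identity (Corollary \ref{RP}), and I will prove it by mimicking the finite-field argument of \cite{XX2006}: compute the motivic measure of an auxiliary space of distinguished triangles in two different ways, then match the resulting expressions using Lemma \ref{space}.

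Introduce the constructible set
$$W(Z, L; M) = \{(f, g, h) : Z \xrightarrow{f} M \xrightarrow{g} L \xrightarrow{h} Z[1] \text{ is distinguished}\},$$
viewed as a subset of $\mathrm{Hom}(Z, M) \times \mathrm{Hom}(M, L) \times \mathrm{Hom}(L, Z[1])$, together with the two surjective projections $p_1 : W(Z, L; M) \to \mathrm{Hom}(Z, M)_L$, $(f, g, h) \mapsto f$, and $p_2 : W(Z, L; M) \to \mathrm{Hom}(M, L)_{Z[1]}$, $(f, g, h) \mapsto g$. For fixed $f \in \mathrm{Hom}(Z, M)_L$, choose a base completion $(f, g_0, h_0)$; by the triangulated category axioms, every other completion with middle object $L$ has the form $(\phi g_0, h_0 \phi^{-1})$ for some $\phi \in \mathrm{Aut}(L)$, giving a surjection $\mathrm{Aut}(L) \twoheadrightarrow p_1^{-1}(f)$. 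Two automorphisms $\phi_1, \phi_2$ yield the same element of $p_1^{-1}(f)$ precisely when $(\phi_1 - \phi_2)g_0 = 0$, which by exactness of the long sequence obtained from $\mathrm{Hom}(-, L)$ applied to the base triangle translates to $\phi_1 - \phi_2 \in n\mathrm{Hom}(Z[1], L) \subseteq \mathrm{End}(L)$. It follows that the fibre has motivic measure $\Upsilon([\mathrm{Aut}L])/\Upsilon([n\mathrm{Hom}(Z[1], L)])$, and hence
$$[W(Z, L; M)] = [\mathrm{Hom}(Z, M)_L] \cdot \frac{\Upsilon([\mathrm{Aut}L])}{\Upsilon([n\mathrm{Hom}(Z[1], L)])}.$$

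A fully symmetric analysis for $p_2$, now using the long exact sequence from $\mathrm{Hom}(Z[1], -)$ applied to the triangle to parametrize completions of a fixed $g$ by $\mathrm{Aut}(Z)$-translates, yields
$$[W(Z, L; M)] = [\mathrm{Hom}(M, L)_{Z[1]}] \cdot \frac{\Upsilon([\mathrm{Aut}Z])}{\Upsilon([\mathrm{Hom}(Z[1], L)n])}.$$
Equating the two expressions for $[W(Z, L; M)]$ and substituting the identities $\Upsilon([n\mathrm{Hom}(Z[1], L)]) = \{M, L\}\{Z, L\}^{-1}\{L, L\}^{-1}$ and $\Upsilon([\mathrm{Hom}(Z[1], L)n]) = \{Z, M\}\{Z, L\}^{-1}\{Z, Z\}^{-1}$ from Lemma \ref{space}, then rearranging, recovers the claimed identity. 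The principal technical obstacle is making the fibre computations rigorous in the ind-constructible framework: one must verify that $n\mathrm{Hom}(Z[1], L)$ and $\mathrm{Hom}(Z[1], L)n$ cut out constructible strata in the fibres (which follows from their being images of linear maps between finite-dimensional homomorphism spaces) and that the quotients $\Upsilon([\mathrm{Aut}L])/\Upsilon([n\mathrm{Hom}(Z[1], L)])$ are well-defined in $\Lambda$, which in turn follows from Proposition \ref{autogroup} guaranteeing the invertibility of $\Upsilon([\mathrm{Aut}L])$ and $\Upsilon([\mathrm{Aut}Z])$.
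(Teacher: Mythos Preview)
Your fibre computation for $p_1 : W(Z,L;M) \to \mathrm{Hom}(Z,M)_L$ contains a genuine error. You assert that $\phi_1,\phi_2 \in \mathrm{Aut}(L)$ yield the same completion $(\phi g_0,\, h_0\phi^{-1})$ \emph{precisely} when $(\phi_1-\phi_2)g_0 = 0$, but this only controls the $g$-component; the requirement $h_0\phi_1^{-1} = h_0\phi_2^{-1}$ is a separate condition, not implied by the first. A clean counterexample: take $M=0$, $L=Z[1]$, and the rotation triangle $Z \to 0 \to Z[1] \xrightarrow{h_0} Z[1]$ with $h_0$ an isomorphism. Here $g_0=0$, so your condition is vacuous and your claimed fibre is all of $n\,\mathrm{Hom}(Z[1],L)=\mathrm{End}(Z[1])$, giving $[p_1^{-1}(0)] = \Upsilon([\mathrm{Aut}\,Z[1]])/\Upsilon([\mathrm{End}\,Z[1]])$. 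But in fact $p_1^{-1}(0) = W \cong \mathrm{Aut}(Z[1])$, since only the connecting map varies and it must be invertible. The defect you are missing is exactly the orbit weight $|\mathrm{End}\,L_1(\alpha)|/|\mathrm{Aut}\,L_1(\alpha)|$ of Proposition~\ref{mainproposition}; because this weight is not constant across $V(Z,L;M)$, the motive $[W(Z,L;M)]$ does not factor as the product you wrote.

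The paper sidesteps this by never forming a quotient. It fixes a reference triangle $(l,m,n)$ and builds two auxiliary \emph{product} spaces: $S_1$, a bundle over $\mathrm{Aut}L \times \mathrm{Hom}(Z,M)_L$ with affine fibre $\mathrm{Hom}(Z[1],L)n$, and $S_2$, a bundle over $\mathrm{Aut}Z \times \mathrm{Hom}(M,L)_{Z[1]}$ with affine fibre $n\,\mathrm{Hom}(Z[1],L)$. An explicit constructible isomorphism $S_1\to S_2$ is then written down, and Lemma~\ref{space} converts the two affine fibres into the required $\{-,-\}$ factors. Since everything is multiplicative, the stabiliser subtleties (equivalently, the $L_1(\alpha)$ terms) never enter. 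If you want to rescue a $W$-based argument you must carry both conditions on $\phi$ and show the stratified contributions match between the two projections, which amounts to redoing Proposition~\ref{mainproposition} motivically and is harder than the paper's route.
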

\begin{proof}
Define the constructible set
$$S_1=\{(a_L, l, tn)\mid a_L\in\mathrm{Aut}L, l\in \mathrm{Hom}_{\mc}(Z, M)_L, t\in \mathrm{Hom}_{\mc}(Z[1], L)\}$$
and
$$
S_2=\{(a_Z, m, nt)\mid a_Z\in \mathrm{Aut}Z, m\in \mathrm{Hom}_{\mc}(M, L)_{Z[1]}, g_2\in \mathrm{Aut}L, t\in \mathrm{Hom}_{\mc}(Z[1], L)\}.
$$
Here, the projection $S_1\rightarrow \mathrm{Aut}L\times \mathrm{Hom}_{\mc}(Z, M)_L$ is a constructible bundle of dimension $\mathrm{dim}_k\mathrm{Hom}_{\mc}(Z[1], L)n$ (it is irrelevant to the choice of $n$). In the same way, $S_2$ is a constructible bundle of dimension $n\mathrm{dim}_k\mathrm{Hom}_{\mc}(Z[1], L)$. Note that given $g_1\in \mathrm{Aut}Z$, $g_1\circ l=l$ means  that $g_1=1+g'_1$ and $g'_1\circ l=0$. However, $g'_1\circ l=0$ is equivalent to say $g'_1=t\circ n$ for some $t\in \mathrm{Hom}(Z[1], L)$. Similarly, $g_2\in \mathrm{Aut}L$ satisfies $mg_2=m$ if and only if $g_2\in 1+n\mathrm{Hom}(Z[1], L).$ For any $l'\in \mathrm{Hom}_{\mc}(Z, M)_L$, there exists unique $a_Z\in \mathrm{Aut}Z$ such that $a_Zl=l'.$ There is an isomorphism $S_1\rightarrow S_2$ defined by sending $(a_L, l', tn)$ to $(a_Z,a_Lm, nt).$
Hence, we have $[S_1\rightarrow \mathfrak{Obj}(\mathcal{C})]=[S_2\rightarrow \mathfrak{Obj}(\mathcal{C})].$ On the other hand, by the definition of motivic Hall algebras and Lemma \ref{space}, we have $$
[S_1\rightarrow \mathfrak{Obj}(\mathcal{C})]=\Upsilon([\mathrm{Aut}L])\{Z, M\}\{Z, L\}^{-1}\{Z, Z\}^{-1}\cdot[\mathrm{Hom}_{\mc}(Z, M)_L\rightarrow \mathfrak{Obj}(\mathcal{C})]
$$
and
$$
[S_2\rightarrow \mathfrak{Obj}(\mathcal{C})]=\Upsilon([\mathrm{Aut}Z])\{M, L\}\{Z, L\}^{-1}\{L, L\}^{-1}\cdot[\mathrm{Hom}_{\mc}(M, L)_{Z[1]}\rightarrow \mathfrak{Obj}(\mathcal{C})].
$$
This concludes the proof of the proposition.
\end{proof}
\begin{theorem}
With the above defined multiplication, $\mathcal{MH}_{T}(\mathcal{C})$ becomes
an associative algebra.
\end{theorem}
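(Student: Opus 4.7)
The plan is to construct an algebra isomorphism $\Phi: \mathcal{MH}(\mathcal{C}) \to \mathcal{MH}_T(\mathcal{C})$ and transfer associativity from $\mathcal{MH}(\mathcal{C})$, whose associativity was established in the previous theorem. This is the motivic analog of Theorem \ref{dual}: in the finite field case, the derived Hall algebra $\mathcal{H}(\mc)$ and its Drinfeld dual $\mathcal{H}^{Dr}(\mc)$ are isomorphic via $v_{[X]} \mapsto |\mathrm{Aut}X|\{X,X\}u_{[X]}$, and the check rests on the derived Riedtmann-Peng formula (Corollary \ref{RP}). Here that role is played by its motivic version, Proposition \ref{motivicRP}.

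First, for each object $X \in \mathfrak{Obj}(\mathcal{C})$, let $v_{[X]}=[pt\to\mathfrak{Obj}(\mathcal{C})]$ (with image $X$) denote the corresponding generator of $\mathcal{MH}(\mathcal{C})$, and $u_{[X]}$ the corresponding element of $\mathcal{MH}_T(\mathcal{C})$. I would define
$$\Phi(v_{[X]}) := \Upsilon([\mathrm{Aut}X])\cdot\{X,X\}\cdot u_{[X]},$$
and extend to arbitrary classes $[\pi:\mathcal{S}\to\mathfrak{Obj}(\mathcal{C})]$ by the motivic integral construction against the weight $\Upsilon([\mathrm{Aut}\pi(s)])\cdot\{\pi(s),\pi(s)\}$. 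By Proposition \ref{autogroup} the factor $\Upsilon([\mathrm{Aut}X])$ is invertible in $\Lambda$, and $\{X,X\}$ is invertible by the standing hypothesis on $\mathbb{L}^k$ and $\mathbb{L}^k-1$, so $\Phi$ is a $\Lambda$-linear bijection.

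Second, I would verify multiplicativity $\Phi(v_{[X]}*v_{[Y]})=\Phi(v_{[X]})\cdot\Phi(v_{[Y]})$ on generators. Expanding both sides using the defining formulas of the two multiplications and cancelling the prefactor $\Upsilon([\mathrm{Aut}X])\{X,X\}$ reduces the check to a pointwise identity, for each $L\in\mathfrak{Obj}(\mathcal{C})$, between
$$\{Y,X[1]\}\cdot\Upsilon([\mathrm{Aut}L])\cdot\{L,L\}\cdot[\mathrm{Hom}(Y,X[1])_{L[1]}\to\mathfrak{Obj}(\mathcal{C})]$$
and
$$\Upsilon([\mathrm{Aut}Y])\cdot\{Y,Y\}\cdot\frac{\{X,L\}}{\{X,X\}}\cdot[\mathrm{Hom}(X,L)_Y\to\mathfrak{Obj}(\mathcal{C})].$$
Applying Proposition \ref{motivicRP} with $Z=Y[-1]$ and $M=X$, together with the identifications $\mathrm{Hom}(Y[-1],X)_L \cong \mathrm{Hom}(Y,X[1])_{L[1]}$ and $\{Y[-1],X\}=\{Y,X[1]\}$ coming from the shift autoequivalence, gives precisely the required equality.

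The main obstacle is the motivic bookkeeping: Proposition \ref{motivicRP} is stated pointwise for fixed $Z,M,L$, while multiplicativity of $\Phi$ requires the analogous equality of motivic integrals over parameterizing families $\mathcal{S}_1,\mathcal{S}_2$. This lifting reduces to the constructibility of the relevant fibre structures (the Hom-sets stratified by cone type, together with their automorphism groups), which is ensured by the standing hypothesis that $\Phi_1,\Phi_2,\Phi_3$ are constructible functions, together with Proposition \ref{autogroup} guaranteeing that $\Upsilon([\mathrm{Aut}(-)])$ behaves constructibly in families. Once multiplicativity of $\Phi$ is established, the associativity of $\mathcal{MH}_T(\mathcal{C})$ is immediate from that of $\mathcal{MH}(\mathcal{C})$ via the isomorphism $\Phi$.
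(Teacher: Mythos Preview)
Your approach is correct but takes a genuinely different route from the paper. The paper proves associativity of $\mathcal{MH}_T(\mathcal{C})$ \emph{directly}: it reduces to generators $u_{[Z]},u_{[X]},u_{[Y]}$, rewrites both triple products as motivic integrals over the sets $\mathrm{Hom}(M\oplus X,L)^{Y,Z[1]}_{L'[1]}$ and $\mathrm{Hom}(L',M\oplus X)^{Y,Z[1]}_{L}$ (the motivic incarnation of Symmetry-I from Section~4), and then applies Proposition~\ref{motivicRP} to the triangle $L'\to M\oplus X\to L\to L'[1]$ to identify them. Only \emph{afterwards}, in a separate theorem, does the paper construct the isomorphism $\Phi:\mathcal{MH}(\mathcal{C})\to\mathcal{MH}_T(\mathcal{C})$ as a further application of Proposition~\ref{motivicRP}. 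You instead reverse the order: build $\Phi$ first, check it is multiplicative via Proposition~\ref{motivicRP} applied to the triangle $X\to L\to Y\to X[1]$, and transport associativity from $\mathcal{MH}(\mathcal{C})$. Your route is more economical in that it avoids rederiving the Symmetry-I identity in the motivic setting and yields both results at once; the paper's route has the virtue of giving an intrinsic proof of associativity for $\mathcal{MH}_T(\mathcal{C})$ that does not logically depend on the (harder, Symmetry-II based) associativity of $\mathcal{MH}(\mathcal{C})$, thereby keeping the two algebras on an equal footing and exhibiting Symmetry-I and Symmetry-II as parallel mechanisms.
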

The proof  can be considered as the motivic version of the proof for \cite[Theorem 3.6]{XX2006}.
\begin{proof}
Set $u_{[E]}:=[\pi: pt\rightarrow \mathfrak{Obj}(\mathcal{C})]$ with $\pi(pt)=E.$ Given three functions $[\pi_i: \mathcal{S}_i\rightarrow \mathfrak{Obj}(\mathcal{C})]$ for $i=1,2,3$, we need to prove
$$
[\pi_3]\cdot([\pi_1]\cdot[\pi_2])=([\pi_3]\cdot[\pi_1])\cdot[\pi_2].
$$ By the reformulation of the  definition of multiplication, the proof of the theorem is easily reduced to the case when
$\mathcal{S}_i$ is just a point. Let $\pi_3(pt)=Z, \pi_2(pt)=Y$ and $\pi_1(pt)=X$. Set $t_{[X]}=\Upsilon^{-1}([\mathrm{Aut}(X)])\cdot \{X, X\}$. Then $u_{[Z]}*(u_{[X]}*u_{[Y]})$ is equal to
$$
\int_{L\in \pi(\mathcal{W}), L'\in \pi(\mathcal{W}')}[\mathrm{Hom}(M\oplus X, L)_{L'[1]}^{Y, Z[1]}\rightarrow \mathfrak{Obj}(\mathcal{C})]t_{[X]}t_{[L]}\{M\oplus X, L\}.
$$
where $\pi(\mathcal{W}')$ is the image of $\pi: \mathrm{Ext}^1(Z, X)\rightarrow  \mathfrak{Obj}(\mathcal{C})$ by sending $\alpha$ to its middle term.
Similarly,
we have that
$(u_{[Z]}*u_{[X]})*u_{[Y]}$ is equal to
$$
\int_{L\in \pi(\mathcal{W}), L'\in \pi(\mathcal{W}')}[\mathrm{Hom}(L', M\oplus X)_{L}^{Y, Z[1]}\rightarrow \mathfrak{Obj}(\mathcal{C})]t_{[X]}t_{[L']}\{L', M\oplus X\}.
$$
Following Proposition \ref{motivicRP}, $(u_{[Z]}*u_{[X]})*u_{[Y]}=u_{[Z]}*(u_{[X]}*u_{[Y]}).$
\end{proof}

\begin{theorem}
There exists an algebra isomorphism $\Phi: \mathcal{MH}(\mathcal{C})\rightarrow \mathcal{MH}_{T}(\mathcal{C})$ defined by $$\Phi([\pi: \mathcal{S}\rightarrow \mathfrak{Obj}(\mathcal{C})])=\int_{s\in \mathcal{S}}t_{[\pi(s)]}u_{[\pi(s)]}$$ where $t_{[\pi(s)]}=\Upsilon^{-1}([\mathrm{Aut}(\pi(s))])\cdot \{\pi(s), \pi(s)\}.$
\end{theorem}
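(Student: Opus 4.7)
The plan is to adapt the finite-field argument underlying Theorem \ref{dual} (and its Corollary for derived Hall algebras) to the motivic setting, with Proposition \ref{motivicRP} playing the role of the scalar Riedtmann--Peng formula. The map $\Phi$ is the rescaling that sends each point class $v_{[X]}=[pt\to X]$ to $t_{[X]}u_{[X]}$, extended by motivic integration over $\mathcal{S}$. Since $\Upsilon([\aut X])$ is invertible in $\Lambda$ by Proposition \ref{autogroup} and $\{X,X\}$ is an integer power of $\mathbb{L}$ and hence invertible, every $t_{[X]}$ is invertible. Because the underlying $\Lambda$-modules of $\mathcal{MH}(\mc)$ and $\mathcal{MH}_T(\mc)$ coincide, this already gives a $\Lambda$-module isomorphism; what remains is compatibility with the two multiplications.

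Next I would reduce the algebra-homomorphism check to generators. Because both products are defined by integrating over $\mathcal{S}_1\times\mathcal{S}_2$, it suffices to verify
\[
\Phi(v_{[X]}*v_{[Y]})=\Phi(v_{[X]})\cdot\Phi(v_{[Y]})
\]
for $v_{[X]},v_{[Y]}$ the point classes of $X,Y\in\mc$. On the left, expanding the Kontsevich--Soibelman product and stratifying by $L=\mathrm{Cone}(\alpha)[-1]$ yields
\[
\Phi(v_{[X]}*v_{[Y]})=\{Y,X[1]\}\int_{L}t_{[L]}\,\Upsilon([\Hom(Y,X[1])_{L[1]}])\,u_{[L]}.
\]
On the right, expanding via the definition of the product in $\mathcal{MH}_T(\mc)$ and contracting the constant map $\Hom(X,L)_Y\to\mathfrak{Obj}(\mc)$ (which sends every $\beta$ to $L$) via relation (3) of $Mot_{st}$ gives
\[
\Phi(v_{[X]})\cdot\Phi(v_{[Y]})=t_{[X]}t_{[Y]}\int_{L}\frac{\{X,L\}\,\Upsilon([\Hom(X,L)_Y])}{\Upsilon([\aut X])\,\{X,X\}}\,u_{[L]}.
\]

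Matching coefficients of $u_{[L]}$ then reduces the whole theorem to the scalar identity
\[
\frac{\{Y,X[1]\}\,\Upsilon([\Hom(Y,X[1])_{L[1]}])}{\Upsilon([\aut Y])\{Y,Y\}}=\frac{\{X,L\}\,\Upsilon([\Hom(X,L)_Y])}{\Upsilon([\aut L])\{L,L\}}
\]
for each $L$. I would obtain this by applying Proposition \ref{motivicRP} to the distinguished triangle $X\to L\to Y\to X[1]$, which compares $\Hom(X,L)_Y$ with $\Hom(L,Y)_{X[1]}$, and then again to the rotated triangle $L\to Y\to X[1]\to L[1]$, which compares $\Hom(L,Y)_{X[1]}$ with $\Hom(Y,X[1])_{L[1]}$; shift-invariance of $\{-,-\}$ and of $\Upsilon([\aut-])$, together with cancellation of common factors, produces the displayed equality.

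The main obstacle will be the bookkeeping of Step 2: one has to carefully separate the three motivic integrations (over $\alpha\in\Hom(Y,X[1])$, over $\beta\in\Hom(X,L)_Y$, and over $L$), and check that the constructible stratification by the invariants $(d_a,l,m)$ used in defining the multiplication of $\mathcal{MH}_T(\mc)$ refines the stratification by isomorphism class $[L]$ needed for coefficient comparison --- with the shift $v_{[L]}\leftrightarrow u_{[L]}$ of conventions handled correctly. Once this is in place, the scalar identity above --- which is simply two applications of Proposition \ref{motivicRP} --- closes the argument, exactly as the ordinary Riedtmann--Peng formula closes the proof of Theorem \ref{dual} in the finite-field case.
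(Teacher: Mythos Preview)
Your proposal is correct and follows essentially the same route as the paper, whose proof is the single sentence ``The proof is a direct application of Proposition \ref{motivicRP}.'' You have simply unpacked that sentence: reduce to point classes, expand both products, and match coefficients via the motivic Riedtmann--Peng formula, exactly parallel to how Theorem \ref{dual} and its corollary are proved in the finite-field case. One small simplification: you do not need two applications of Proposition \ref{motivicRP}; a single application with $(Z,M,L)$ replaced by $(Y[-1],X,L)$ (i.e.\ to the rotated triangle $Y[-1]\to X\to L\to Y$) already yields your displayed scalar identity directly, after using $\Hom(Y[-1],X)_L\cong\Hom(Y,X[1])_{L[1]}$ and shift-invariance of $\Upsilon([\aut-])$ and $\{-,-\}$.
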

The proof is a direct application of Proposition \ref{motivicRP}.

\end{document}